\begin{document}

\newtheorem{theorem}{Theorem}[section]
\newtheorem{result}[theorem]{Result}
\newtheorem{fact}[theorem]{Fact}
\newtheorem{conjecture}[theorem]{Conjecture}
\newtheorem{lemma}[theorem]{Lemma}
\newtheorem{proposition}[theorem]{Proposition}
\newtheorem{corollary}[theorem]{Corollary}
\newtheorem{facts}[theorem]{Facts}
\newtheorem{props}[theorem]{Properties}
\newtheorem*{thmA}{Theorem A}
\newtheorem{ex}[theorem]{Example}
\theoremstyle{definition}
\newtheorem{definition}[theorem]{Definition}
\newtheorem{example}[theorem]{Example}
\newtheorem{remark}[theorem]{Remark}
\newtheorem*{defna}{Definition}

\newcommand{\notes} {\noindent \textbf{Notes.  }}
\newcommand{\note} {\noindent \textbf{Note.  }}
\newcommand{\defn} {\noindent \textbf{Definition.  }}
\newcommand{\defns} {\noindent \textbf{Definitions.  }}
\newcommand{\x}{{\bf x}}
\newcommand{\z}{{\bf z}}
\newcommand{\B}{{\bf b}}
\newcommand{\V}{{\bf v}}
\newcommand{\T}{\mathbb{T}}
\newcommand{\Tr}{\mathcal{T}}
\newcommand{\Z}{\mathbb{Z}}
\newcommand{\Hp}{\mathbb{H}}
\newcommand{\D}{\mathbb{D}}
\newcommand{\R}{\mathbb{R}}
\newcommand{\N}{\mathbb{N}}
\renewcommand{\B}{\mathbb{B}}
\newcommand{\C}{\mathbb{C}}
\newcommand{\ft}{\widetilde{f}}
\newcommand{\dt}{{\mathrm{det }\;}}
 \newcommand{\adj}{{\mathrm{adj}\;}}
 \newcommand{\0}{{\bf O}}
 \newcommand{\av}{\arrowvert}
 \newcommand{\zbar}{\overline{z}}
 \newcommand{\xbar}{\overline{X}}
 \newcommand{\htt}{\widetilde{h}}
\newcommand{\ty}{\mathcal{T}}
\renewcommand\Re{\operatorname{Re}}
\renewcommand\Im{\operatorname{Im}}
\newcommand{\tr}{\operatorname{Tr}}
\newcommand{\Stab}{\operatorname{Stab}}
\newcommand{\Log}{\operatorname{Log}}
\newcommand{\norm}[1]{\left\lVert#1\right\rVert}

\newcommand{\ds}{\displaystyle}
\numberwithin{equation}{section}

\renewcommand{\theenumi}{(\roman{enumi})}
\renewcommand{\labelenumi}{\theenumi}

\title{On Simultaneous Linearization}

\author{Alastair Fletcher}
\address{Department of Mathematical Sciences, Northern Illinois University, DeKalb, IL 60115-2888. USA}
\email{fletcher@math.niu.edu}

\author{Douglas Macclure}
\address{Department of Mathematical Sciences, Northern Illinois University, DeKalb, IL 60115-2888. USA}
\email{doug.macclure@gmail.com}

\subjclass[2010]{Primary 37F10; Secondary 30C65, 30D05}
\thanks{The first named author was supported by a grant from the Simons Foundation (\#352034, Alastair Fletcher).}

\maketitle

\begin{abstract}
Given a uniformly quasiregular mapping, there is typically no reason to assume any relationship between linearizers at different repelling periodic points. However, in the current paper we prove that in the case where the uqr map arises as a solution of a Schr\"oder equation then, with some further natural assumptions, if $L$ is a linearizer at one repelling periodic point, then $L\circ T$ is a linearizer at another repelling periodic point, where $T$ is a translation. In this sense we say $L$ simultaneously linearizes $f$. In the plane, an example would be that $e^z$ simultaneously linearizes $z^2$. Our methods utilize generalized derivatives for quasiregular mappings, including a chain rule and inverse derivative formula, which may be of independent interest.
\end{abstract}

\section{Introduction}

\subsection{Background}

In dynamics, linearization is used to conjugate a given map near a fixed point to a simpler map, from which the behaviour of the iterates of the original map near the fixed point may be deduced. If $f$ is a rational map and $z_0 \in \C$ is a repelling fixed point, that is $|f'(z_0)|>1$, then there is a transcendental function $L$ so that 
\[ f(L(z)) = L(f'(z_0)z) \]
holds for all $z\in \C$. If $f$ has poles in $\C$, then so does $L$. If $L'(0)=1$, then $L$ is unique and thus called the Poincar\'e linearizer of $f$ at $z_0$.

We may again speak of linearizers at repelling periodic points, just replacing the map $f$ with a suitable iterate. Since the repelling periodic points are dense in the Julia set $J(f)$, there is a collection of linearizers of $f$ and its iterates at a dense set of $J(f)$. There is no reason to think these various linearizers should be related. However, in certain circumstances they are. Before explaining the general situation, we illustrate with an example.

\begin{example} 
Let $f(z) = z^2$ and $z_0=1$. Clearly $z_0$ is a repelling fixed point of $f$ with $f'(z_0)=2$. Set $A(z) = 2z$. Then $L(z) = e^z$ is the Poincar\'e linearizer for $f$ at $z_0$ satisfying the functional equation $f\circ L = L\circ A$.
We can find periodic points of order $m$ in the Julia set $J(f) = \{z : |z|=1\}$ by looking for solutions of the iterated functional equation $f^m \circ L = L\circ A^m$. Since $L$ is $2\pi i$-periodic, an elementary calculation shows that the fixed points of $f^m$ are given by
\begin{equation}
\label{eq:example} 
\exp \left ( \frac {2k\pi i }{2^m-1} \right ),
\end{equation}
where $k\in \Z$. For a fixed $k\in \{0 ,1,\ldots, 2^m-1\}$, consider the translation $T(z) = z + \frac{ 2k\pi i}{2^m-1}$ and let
\[ M = L\circ T.\]
Then a further elementary calculation shows that
\[ f^m \circ M = M\circ A^m\]
and $M'(0)=1$. Hence $M$ is the unique Poincar\'e linearizer for $f^m$ at the point given by \eqref{eq:example}.
\end{example}

The conclusion of this example is that at any repelling periodic point in $J(f)$, the Poincar\'e linearizer is given by a composition of $e^z$ and a translation. This is what we mean by simultaneous linearization, following the usage of this phrase by Milnor \cite{Milnor2}.

The key point here is that the linearizer $L$ of $z^2$ at $z_0=1$ is strongly automorphic with respect to a discrete group of isometries $G$, that is, $L$ is periodic with respect to $G$ and also $G$ acts transitively on fibres $L^{-1}(w)$. If $h:\C \to \overline{\C}$ is strongly automorphic with respect to a discrete group of isometries and $A$ is a complex linear map satisfying $AGA^{-1} \subset G$, then there is a unique solution to the Schr\"oder equation
\[ f\circ h = h\circ A.\]
This may look identical to the linearizer equation, but the difference is in the inputs. In the linearizer equation, we input $f$ and $f'(z_0)$ to get $L$, and $L$ must be injective at $0$. In the Schr\"oder equation, we input $h$ and $A$, and $h$ need not be injective at $0$, for example $h(z) = \cos z$.

The upshot is that if $f$ arises as a solution of the Schr\"oder equation with an appropriate $h$ and $A$, then a linearizer of $f$ at a repelling periodic point which is not in the image of the branch set of $h$ under $h$ is a composition of $h$ and a translation.

There are three types of such rational maps:
\begin{enumerate}[(i)]
\item power mappings $z^d$ arising from, for example, $e^z$ with automorphy group $G$ isomorphic to $\Z$,
\item Chebyshev maps arising from, for example, $\cos z$ with $G$ isomorphic to $\Z \times( \Z / 2\Z )$,
\item Latt\'es maps arising from, for example, $\wp(z)$ with translation subgroup of $G$ isomorphic to $\Z \times \Z$.
\end{enumerate}

The reason for precluding repelling periodic of $f$ lying on the image of the branch set of $h$ under $h$ can be illustrated in case (ii): if $h(z) = \cos z$, $A(z) = 2z$, then the solution to the Schr\"oder equation is $f(z) = 2z^2-1$. However, the Poincar\'e linearizer for $f$ at $z_0=1$ is not $h$ but $L(z) = \cosh (\sqrt{2z})$.

\subsection{Quasiregular maps}

The natural setting for extending the above discussion into $\R^n$, for $n\geq 2$, is that of quasiregular maps. We will give precise definitions below, but roughly speaking, a quasiregular map is a map of bounded distortion. An important subclass of quasiregular maps is the class of uniformly quasiregular maps, or uqr maps for short, where there is a uniform bound on the distortion of the iterates. There are quasiregular versions of both linearizers and solutions to the Schr\"oder equation, which we now outline.

Given a uqr map $f:\overline{\R^n} \to \overline{\R^n}$, $n\geq 2$, and a fixed point $x_0 \in \R^n$ (the case where $x_0$ is the point at infinity can be dealt with by composing with M\"obius transformations), we can consider limits of subsequences of sequences of the form
\[ \frac{ f(x_0 + \lambda_mx) - f(x_0) }{\lambda_m}\]
where $\lambda_m \to 0$. Via normal family machinery, limits along subsequences are guaranteed to exist \cite{HMM}. Any such limit $\psi$ is called a generalized derivative of $f$ at $x_0$, and the collection of generalized derivatives is denoted $\mathcal{D}f(x_0)$. If $f$ is locally injective at $x_0$, then every such $\psi$ is a uniformly quasiconformal map, that is, an injective uqr map.

There is a classification of fixed points of uqr maps. In this paper, the important case is repelling: every element $\psi$ of $\mathcal{D}f(x_0)$ is a loxodromic repelling uniformly quasiconformal map, that is, $\psi$ fixes $0$ and infinity, and for every $x\in \R^n \setminus \{ 0 \}$, $\psi ^m(x) \to \infty$. Then for any $\psi \in \mathcal{D}f(x_0)$, there is a (non-unique) transcendental-type quasiregular map $L$ satisfying
\[ f\circ L = L\circ \psi\]
for all $x\in \R^n$. Such a map $L$ is called a linearizer for $f$ at $x_0$.

On the other hand, if $h:\R^n \to \overline{\R^n}$ is strongly automorphic with respect to a tame quasiconformal group $G$, that is, $G$ is quasiconformally conjugate to a discrete group of isometries, and $A$ is a loxodromically repelling uniformly quasiconformal map satisfying $AGA^{-1} \subset G$, there is a unique non-injective uqr map $f$ solving the Schr\"oder equation
\[ f\circ h = h\circ A.\]
As before, there are three classes  of such strongly automorphic maps: Zorich-type, sine-type and $\wp$-type, with uqr maps of power-type, Chebyshev-type and Latt\'es-type respectively. See \cite{FM2} for more on this classification.

The main aim of this paper is to prove a simultaneous linearization result for uqr solutions to the Schr\"oder equation. Before we state our result, we need to discuss generalized derivatives for arbitrary quasiregular maps. The results we obtain in this setting may be of independent interest.

\subsection{Generalized derivatives}

If $U\subset \R^n$ is a domain, $f:U\to \R^n$ is quasiregular and $x_0\in U$, then for $t<\operatorname{dist}(x_0,\partial U)$, we can consider the map
\[ f_t(x) = \frac{f(x_0 +tx) - f(x_0)}{r_f(t)},\]
where $r_f(t)$ is the mean radius of the image of the ball centred at $x_0$ of radius $t$ under $f$. Again by normal family machinery, limits of $f_{t_m}$ exist along subsequences of any sequence $t_m \to 0$. Such limits are also called generalized derivatives, and the collection of generalized derivatives of $f$ at $x_0$ is denoted by $T(x_0,f)$ and called the infinitesimal space of $f$ at $x_0$.

It is worth pointing out that we have two notions of generalized derivatives and infinitesimal spaces, one for fixed points of uqr maps and one for any point of an arbitrary quasiregular map. The uqr version is useful for classifying fixed points, whereas the general version is not, since every generalized derivative in this setting preserves the measure of the unit ball $\B^n$. 
Somewhat interestingly, the general version appeared in the literature \cite{GMRV} several years before the uqr version \cite{HMM}.

In this paper, the uqr version of generalized derivatives appear when dealing with the linearizer equation, while we use the general version to prove some technical results.

If a quasiregular map $f$ is differentiable at $x_0$ (and recall quasiregular maps are differentiable almost everywhere), then $T(x_0,f)$ consists only of a scaled version of the derivative $f'(x_0)$. This is a particular case of a simple infinitesimal space, that is, when $T(x_0,f)$ contains only one map. 

We will prove the following versions of results familiar to any calculus student. Note that the choice of the point $0$ is not important, and just eases the notation.

\begin{theorem}[Chain Rule for Generalized Derivatives]
\label{thm:chainrule}
Suppose $f,h:\R^n\to\R^n$ are quasiregular mappings which fix zero and $T(0,f)$ and $T(0,h)$ are simple, with $T(0,f) = \{g_f\}$ and $T(0,h) = \{g_h\}$.  Then $T(0,f\circ h)$ is simple, and consists of the map $g_{f\circ h}= C(g_f\circ g_h)$, where $C>0$ is a positive constant so that $g_{f\circ h}$ preserves the measure of $B(0,1)$.  
\end{theorem}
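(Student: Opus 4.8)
The plan is to reduce the statement to an exact factorization of the rescaled composition and then to pin down the leftover scalar using the normalization $|g(B(0,1))|=|B(0,1)|$ satisfied by every generalized derivative. First I would record preliminaries. Since $T(0,f)$ and $T(0,h)$ are nonempty, $f$ and $h$, and hence $f\circ h$, are non-constant near $0$, so they are open and discrete there; thus $r_f(t)$, $r_h(t)$ and $r_{f\circ h}(t)$ are positive for all small $t>0$, and $r_h(t)\to 0$ as $t\to 0$ since $h$ is continuous with $h(0)=0$. Simplicity, together with the precompactness of the normal families $\{h_t\}$ and $\{f_s\}$, gives $h_t\to g_h$ and $f_s\to g_f$ locally uniformly as $t,s\to 0^+$ (every subsequential limit is the unique element of the infinitesimal space, so the whole family converges). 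Finally $g_f\circ g_h$ is a non-constant quasiregular map, so $g_f(g_h(B(0,1)))$ is open with finite positive measure.

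The key step is the identity
\[
 (f\circ h)_t(x) \;=\; \frac{f(h(tx))}{r_{f\circ h}(t)} \;=\; c(t)\, f_{r_h(t)}\!\big(h_t(x)\big), \qquad c(t):=\frac{r_f(r_h(t))}{r_{f\circ h}(t)}>0,
\]
which follows immediately from $h(tx)=r_h(t)\,h_t(x)$ and $f(r_h(t)\,y)=r_f(r_h(t))\,f_{r_h(t)}(y)$, using that $f$ and $h$ fix $0$. A routine compactness argument then shows $f_{r_h(t)}\circ h_t\to g_f\circ g_h$ locally uniformly: on a compact $K$ the images $h_t(K)$ eventually lie in a fixed compact $K'$, on which $f_{r_h(t)}\to g_f$ uniformly and $g_f$ is uniformly continuous, while $h_t\to g_h$ uniformly on $K$.

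Now I would take an arbitrary sequence $t_m\to 0^+$ and, by normality, pass to a subsequence along which $(f\circ h)_{t_m}\to g$ for some $g\in T(0,f\circ h)$. Evaluating the displayed identity at a point $x_0$ with $g_f(g_h(x_0))\neq 0$ forces $c(t_m)\to C:=g(x_0)/g_f(g_h(x_0))$, which is finite; moreover $C>0$, since $C=0$ would give $g\equiv 0$, impossible for a generalized derivative. Hence $g=C(g_f\circ g_h)$. Because $|g(B(0,1))|=|B(0,1)|$ for every generalized derivative, $C^n\,|g_f(g_h(B(0,1)))|=|B(0,1)|$, so $C$ is determined by $f$ and $h$ alone; therefore every element of $T(0,f\circ h)$ is this single map, i.e. $T(0,f\circ h)$ is simple with $g_{f\circ h}=C(g_f\circ g_h)$, and $C$ is exactly the constant normalizing the measure of $B(0,1)$.

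I expect the main obstacle to be the scalar $c(t)=r_f(r_h(t))/r_{f\circ h}(t)$ itself: evaluating its limit directly would require a ``regular variation'' control on the mean-radius functions that seems hard to obtain head-on. The device that avoids this is to never compute $\lim c(t)$ --- the existence of subsequential limits of $(f\circ h)_t$ is already supplied by the normal-family machinery, and the rigid constraint $|g(B(0,1))|=|B(0,1)|$ then forces all such limits, hence all subsequential limits of $c(t)$, to agree.
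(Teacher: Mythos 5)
Your proof is correct and takes a genuinely cleaner route than the paper's. The paper first replaces $f$ and $h$ by their asymptotic representations $D_f(x) = r_f(|x|)g_f(x/|x|)$ and $D_h$ (from \cite[Theorem~4.1]{GMRV}), invokes Lemma~\ref{lem:gp} to pass the relation $\sim$ through composition, and then performs a fairly long $\sim$-computation that leans heavily on the $d$-homogeneity of $g_f,g_h$ and the asymptotic $d$-homogeneity of $r_f,r_h$ to arrive at $(f\circ h)(tx)/r_{f\circ h}(t) \sim \bigl(r_f(r_h(t))/r_{f\circ h}(t)\bigr)g_f(g_h(x))$. You instead start from the \emph{exact} algebraic identity $(f\circ h)_t(x) = c(t)\,f_{r_h(t)}(h_t(x))$ with $c(t) = r_f(r_h(t))/r_{f\circ h}(t)$, which follows immediately from $f(0)=h(0)=0$ and the definitions, and then push $f_{r_h(t)}\circ h_t \to g_f\circ g_h$ through locally uniformly by an elementary diagonal/compactness argument. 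This sidesteps both the $\sim$-machinery of Lemma~\ref{lem:gp} and the homogeneity of $g_f,g_h,r_f,r_h$ entirely. From there both arguments converge: neither you nor the paper compute $\lim c(t)$ directly; instead you both pass to a convergent subsequence of $(f\circ h)_t$ and use the rigid normalization $|g(B(0,1))| = |B(0,1)|$ satisfied by every element of $T(0,f\circ h)$ to force all subsequential limits of $c(t)$ (and hence of $(f\circ h)_t$) to coincide. What your approach buys is economy and transparency --- it makes clear that the chain rule is really a consequence of the exact multiplicativity of the rescaling plus the normalization --- whereas the paper's route makes visible the asymptotic representation $D_f$, which it needs again for the inverse formula and Theorem~\ref{thm:simpdiff}. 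One small point of precision: when extracting $c(t_m)\to C$ by evaluating at a point $x_0$ with $g_f(g_h(x_0))\neq 0$, it is cleanest to take norms first, $c(t_m) = |(f\circ h)_{t_m}(x_0)|\,/\,|f_{r_h(t_m)}(h_{t_m}(x_0))|$, which avoids dividing vectors; but this is exactly what you intended.
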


\begin{theorem}[Inverse Generalized Derivative Formula]
\label{thm:invgender}
Let $f$ be quasiconformal in a neighbourhood $U$ of $0$ so that $f$ fixes $0$ and $T(0,f)$ is simple.  If $T(0,f) = \{g\}$, then there exists a constant $C > 0$ so that $T(0,f^{-1}) = \{Cg^{-1}\}$.  In particular, $T(0,f^{-1})$ is simple.  
\end{theorem}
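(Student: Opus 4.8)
\emph{Proof sketch.} The plan is to reduce the statement to the already-known convergence of the maps $f_t$ by means of an explicit change of variables. Write $\delta=\operatorname{dist}(0,\partial U)$ and let $|E|$ denote the Lebesgue measure of a measurable set $E\subset\R^n$. Since $f$ is quasiconformal with $f(0)=0$, it is a homeomorphism of $B(0,\delta)$ onto a neighbourhood of $0$ with quasiconformal inverse $f^{-1}$; recall that here $f_t(x)=f(tx)/r_f(t)$, where $r_f(t)$ is the radius of the ball of the same measure as $f(B(0,t))$, and analogously $(f^{-1})_s(y)=f^{-1}(sy)/r_{f^{-1}}(s)$. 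The first step is to observe that $t\mapsto r_f(t)$ is a strictly increasing homeomorphism of $(0,\delta)$ onto an interval $(0,\delta')$ with $r_f(t)\to0$ as $t\to0$: monotonicity holds because $f$ is injective and open, so that $t\mapsto|f(B(0,t))|$ is strictly increasing; continuity holds because quasiconformal maps satisfy Lusin's condition (N), so that the spheres $\{|x|=t\}$ carry no measure; and the limit follows from the continuity of $f$ at $0$. In particular, any sequence $s_m\to0$ can be written as $s_m=r_f(t_m)$ with $t_m\to0$.

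Next I would establish the change-of-variables identity: solving $y=f_t(x)=f(tx)/r_f(t)$ for $x$ gives $f_t^{-1}(y)=f^{-1}(r_f(t)\,y)/t$, so with $s=r_f(t)$ one obtains
\[
(f^{-1})_s=\mu(t)\,f_t^{-1},\qquad \mu(t):=\frac{t}{r_{f^{-1}}(r_f(t))}\in(0,\infty).
\]
Since $T(0,f)=\{g\}$ is simple, $f_t\to g$ locally uniformly as $t\to0$ (any subsequence has a further subsequence converging to the unique member of $T(0,f)$), where $g$ is a quasiconformal self-map of $\R^n$ fixing $0$. A standard argument then yields $f_t^{-1}\to g^{-1}$ locally uniformly: the images $f_t(B(0,\delta/t))=f(B(0,\delta))/r_f(t)$ exhaust $\R^n$ as $t\to0$, and for fixed $a\neq0$ a Jordan--Brouwer argument shows that $x_t:=f_t^{-1}(a)$ stays bounded, so any limit point $x_*$ satisfies $g(x_*)=a$, whence $x_t\to g^{-1}(a)$.

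Then, given an arbitrary sequence $s_m\to0$, I would set $t_m=r_f^{-1}(s_m)\to0$ and pass to a subsequence along which $(f^{-1})_{s_m}\to h$ for some $h\in T(0,f^{-1})$; since $f^{-1}$ is quasiconformal, $h$ is quasiconformal, hence injective, and fixes $0$. Evaluating the identity above at a fixed $a\neq0$ and taking norms gives
\[
\mu(t_m)=\frac{|(f^{-1})_{s_m}(a)|}{|f_{t_m}^{-1}(a)|}\longrightarrow\frac{|h(a)|}{|g^{-1}(a)|}=:C\in(0,\infty),
\]
the denominator being nonzero because $g^{-1}$ is injective with $g^{-1}(0)=0$, and $C>0$ because $h(a)\neq0$. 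Hence $(f^{-1})_{s_m}=\mu(t_m)f_{t_m}^{-1}\to Cg^{-1}$ locally uniformly, so $h=Cg^{-1}$. Finally, every element of $T(0,f^{-1})$ preserves the measure of $B(0,1)$, which forces $C^n|g^{-1}(B(0,1))|=|B(0,1)|$; since $|g^{-1}(B(0,1))|$ is a fixed positive, finite number, $C=(|B(0,1)|/|g^{-1}(B(0,1))|)^{1/n}$ is the same for every sequence, and so $T(0,f^{-1})=\{Cg^{-1}\}$ is simple.

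I expect the main obstacle to be the implication $f_t\to g\Rightarrow f_t^{-1}\to g^{-1}$: one must check that the inverse maps are eventually defined on any fixed compact set and that no mass escapes to infinity in the limit, i.e.\ that the limiting map is genuinely $g^{-1}$ and not something degenerate. The monotonicity and continuity of $r_f$ in the first step are what allow an arbitrary sequence $s_m\to0$ to be realized as $r_f(t_m)$ with $t_m\to0$, while the measure normalization in the last step is what makes the scalar $C=\lim\mu(t)$ well defined and independent of the chosen sequence.
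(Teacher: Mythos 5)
Your proof is correct, but it follows a genuinely different route from the one in the paper. The paper's argument passes through the asymptotic representation $D(x)=r_f(|x|)g(x/|x|)$: it first shows $f^{-1}\sim D^{-1}$ (Lemma \ref{lem:asymptinv}), transfers the infinitesimal space across the equivalence $\sim$ (Lemma \ref{lem:simlim}), derives an explicit asymptotic formula $D^{-1}(x)\sim (r_D)^{-1}(|x|)g^{-1}(x/|x|)$, and then computes $r_{D^{-1}}$ by an integral of $|g^{-1}|$ over $S^{n-1}$, justified by the starlike-image lemma (Lemma \ref{lem:starlikesimp}) and the homogeneity of $r_f^{-1}$ (Lemma \ref{lem:meanhom}). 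You bypass all of this machinery with the change-of-variables identity $(f^{-1})_{r_f(t)}=\mu(t)\,f_t^{-1}$, $\mu(t)=t/r_{f^{-1}}(r_f(t))$, the convergence $f_t\to g$ and hence $f_t^{-1}\to g^{-1}$ (your Jordan--Brouwer step to keep $f_t^{-1}(a)$ bounded is the right way to handle this, and equicontinuity from uniform $K$-quasiconformality upgrades pointwise to locally uniform convergence), and then pin down $C$ abstractly by the fact that every element of the infinitesimal space preserves the measure of $\B^n$. What each approach buys: the paper's computation yields an explicit formula for the constant as a normalized spherical integral of $|g^{-1}|$ and reuses lemmas that are needed elsewhere (e.g.\ for the chain rule and Theorem \ref{thm:simpdiff}), whereas your argument is shorter, more elementary, and logically self-contained, needing only the monotone continuity of $r_f$ (cf.\ \cite[Lemma 3.1]{FW}), normality from \cite{GMRV}, and the volume normalization of generalized derivatives.
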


In each theorem here, we restrict to simple infinitesimal spaces because in this setting we have a useful asymptotic representation for the behaviour of $f$ near $0$. It is worth reiterating that a quasiregular map is guaranteed to be simple almost everywhere.

We will apply these results to prove the following result involving Schr\"oder equations.

\begin{theorem}
\label{thm:simpdiff}
Suppose $f:\overline{\R^n} \to \overline{\R^n}$ is uqr and $x_0\in \R$ is a repelling fixed point of $f$. Further, suppose that there exist $\lambda > 1$ and a quasiregular mapping $L:\R^n \to \R^n$ that is locally injective near $0$, where $f\circ L = L \circ A$ is satisfied for $A(x) = \lambda x$ and $L(0) = x_0$.  If $T(0,L)$ is simple, then $f$ is differentiable at $x_0$, with derivative $f'(x_0) = \lambda^dId$, where $d$ is the homogeneity factor of $L$ at $0$.  
\end{theorem}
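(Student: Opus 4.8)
The plan is to extract from the hypothesis an asymptotic description of $L$ near $0$, push it through the functional equation $f \circ L = L \circ A$, and read off differentiability of $f$ at $x_0 = L(0)$. Since $T(0,L)$ is simple, say $T(0,L) = \{g\}$, we have the asymptotic representation
\[
L(x) = x_0 + r_L(t)\bigl(g(x/t) + o(1)\bigr)
\]
as $x \to 0$ with $|x| = t$, where $r_L(t)$ is the mean radius function; moreover $g$ is homogeneous of some degree $d > 0$ at $0$ in the sense that $r_L$ is (asymptotically) regularly varying with index $d$, i.e. $r_L(\lambda t)/r_L(t) \to \lambda^d$. (This is exactly the structure theory of infinitesimal spaces from \cite{GMRV}; $g$ is a generalized derivative and its homogeneity is what the statement calls the homogeneity factor $d$.) The first step is to make this precise and to note that, because $L$ is locally injective near $0$, $g$ is a quasiconformal map of $\R^n$.

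Second, I would iterate the Schr\"oder equation to control $L$ on a full neighbourhood of $0$ from its behaviour on an arbitrarily small ball. Writing $f^k \circ L = L \circ A^k$ with $A^k(x) = \lambda^k x$, and using that $A$ is an expanding linear map, any fixed $x$ near $0$ can be written as $A^k(y)$ with $y$ as small as we like; thus $L(x) = f^k(L(y))$. This is the standard device that turns a local statement near $0$ into a semi-global one, and it is what lets the homogeneity degree $d$ of $L$ at $0$ govern the behaviour of $f$ at $x_0$, rather than just some infinitesimal information.

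Third, I would compute $T(x_0, f)$ directly. Using the Chain Rule for Generalized Derivatives (Theorem \ref{thm:chainrule}) applied to the two sides of $f \circ L = L \circ A$ — with the caveat that $x_0 \neq 0$, so one first conjugates by translations, which is harmless since the statement explicitly notes the choice of base point is immaterial — simplicity of $T(0,L)$ forces $T(x_0,f)$ to be simple as well, and the generalized derivative of the composite must match on both sides. On the right-hand side, $A$ is linear with $A'(0) = \lambda\,Id$, so its generalized derivative is the normalized map $x \mapsto \lambda x$ up to the volume-preserving constant; composing with $g$ and renormalizing, the homogeneity index of $L \circ A$ at $0$ is still $d$ but the scaling picks up a factor $\lambda^d$. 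On the left-hand side, $f \circ L$ has generalized derivative $C(g_f \circ g)$ where $g_f$ is the unique element of $T(x_0,f)$. Matching the two expressions and using injectivity of $g$ (Theorem \ref{thm:invgender} gives $g^{-1}$ as a generalized derivative of a local inverse of $L$, so one may cancel $g$) forces $g_f$ to be the \emph{linear} conformal map $x \mapsto \lambda^d x$, i.e. $g_f = \lambda^d\,Id$. A simple infinitesimal space consisting of a single linear map is precisely the statement that $f$ is differentiable at $x_0$ with that derivative, so $f'(x_0) = \lambda^d\,Id$.

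The main obstacle I anticipate is the bookkeeping of the normalizing constants and the mean-radius functions: the generalized derivatives are only defined up to the volume-normalization constant $C > 0$, and one must check that after cancelling $g$ the \emph{only} freedom left is a positive scalar, and that this scalar is genuinely $\lambda^d$ and not merely $\lambda^d$ times an extra conformal linear factor. This is where homogeneity of $g$ (regular variation of $r_L$ with a well-defined index $d$) does the real work, and where the locally injective hypothesis is essential — it guarantees $g$ is quasiconformal, hence cancellable, and that $d$ is genuinely positive and finite. Verifying that the resulting single-element infinitesimal space $\{\lambda^d Id\}$ implies honest (Fréchet) differentiability, rather than just directional or approximate differentiability, is a second point requiring care, but it follows from the asymptotic representation in \cite{GMRV} once the infinitesimal space is known to be a single linear map.
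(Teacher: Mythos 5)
Your proposed step 3 has a gap that traces to a mis-handling of the GMRV normalization. Every element of $T(x_0,f)$ is obtained by scaling by the mean radius $r_f(t_k)$ and hence preserves the volume of the unit ball; so if you match $C(g_f \circ g_L) = g_L$ you are forced to conclude $g_f = Id$, \emph{not} $g_f = \lambda^d\,Id$. The factor $\lambda^d$ cannot live in the generalized derivative — it can only surface in the asymptotics of $r_f$. But then your closing inference breaks down: knowing $T(x_0,f) = \{Id\}$ does \emph{not} imply differentiability. It only gives $f(x) - x_0 \sim r_f(|x-x_0|)(x-x_0)/|x-x_0|$, and asymptotic $1$-homogeneity of $r_f$ does not force $r_f(t)\sim ct$ for a constant $c$ — an oscillating slowly-varying factor is perfectly compatible with a simple infinitesimal space $\{Id\}$ and destroys Fr\'echet differentiability. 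So computing $T(x_0,f)$ via the chain rule is not enough; you must control $r_f$ itself. (Your suggested step 2, iterating the Schr\"oder equation to "globalize", is a red herring and plays no role.)

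The paper's proof avoids this trap by never passing through $T(x_0,f)$. It writes $f = L\circ A \circ L^{-1}$ near $x_0$, takes the asymptotic representations $L(x) \sim r_L(|x|)\,g_L(x/|x|) + x_0$ and — via Theorem \ref{thm:invgender} — the analogous one for $L^{-1}$, and then exploits the \emph{exact} identity $L\circ L^{-1}=Id$ to obtain
\[
x - x_0 \sim r_L\bigl(|L^{-1}(x)|\bigr)\,g_L\bigl(L^{-1}(x)/|L^{-1}(x)|\bigr)
\quad\text{as } |x-x_0|\to 0.
\]
Since $A L^{-1}(x) = \lambda L^{-1}(x)$, plugging into the representation of $L$ and using the $d$-homogeneity of $r_L$ and $g_L$ produces $f(x)-x_0 \sim \lambda^d\, r_L(|L^{-1}(x)|)\,g_L(L^{-1}(x)/|L^{-1}(x)|)$. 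Comparing with the displayed identity gives $f(x)-x_0 \sim \lambda^d(x-x_0)$, which is genuine Fr\'echet differentiability with $f'(x_0)=\lambda^d Id$. The decisive move is this self-cancellation coming from $L\circ L^{-1}=Id$, which tracks $r_f$ implicitly without ever having to compute it; that is the ingredient your chain-rule scheme is missing.
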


Here, the homogeneity factor of $L$ at $0$ is the homogeneity factor of the mean radius function $r_L$, that is, $d>0$ so that $r_L(st) \sim s^dr_L(t)$ as $t\to 0$. These terms will be more precisely defined below.
This result, in turn, leads to the main result of this paper.

\begin{theorem}
\label{thm:repperpts}
Suppose $h:\R^n \to \overline{\R^n}$ is strongly automorphic with respect to a crystallographic group $G$, $M:\R^n \to \R^n$ is given by $M = \lambda \mathcal{O}$ for $\lambda > 1$, $\mathcal{O}$ orthogonal and $M$ satisfies
\[M G M^{-1} \subset G.\]  Let $f$ be the unique uqr solution to the Schr\"oder equation
\[f \circ h = h \circ M.\]
Further, suppose $T(x_0,h)$ is simple and $g_{x_0}\in T(x_0,h)$ is $1$-homogeneous for all $x_0 \in h^{-1}(J(f))$.  

Then if $x'$ is a repelling periodic point of $f$ of period $m$ with $x' \notin h(\mathcal{B}_{h})$, there exists $r\in \N$ independent of $x'$ and a translation $T'$ so that $L= h \circ T'$ is a linearizer of $f^{rm}$ at $x'$.  Hence, $h$ simultaneously linearizes $f$ at any repelling periodic point $x' \notin h\left(\mathcal{B}_{h}\right).$   Moreover $f^{rm}$ is differentiable, with derivative $(f^{rm})'(x') = \lambda^{rm}Id$.
\end{theorem}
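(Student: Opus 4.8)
The plan is to mimic the complex-analytic example from the introduction, replacing explicit computations by the generalized-derivative machinery developed earlier in the paper. First I would set up the combinatorics of periodic points. Since $f$ solves the Schr\"oder equation $f\circ h = h\circ M$, iterating gives $f^m\circ h = h\circ M^m$ for every $m$. Because $h$ is strongly automorphic with respect to the crystallographic group $G$, its fibres $h^{-1}(w)$ are precisely $G$-orbits, and $MGM^{-1}\subset G$ ensures that $M^m$ descends through $h$. A repelling periodic point $x'$ of period $m$ with $x'\notin h(\mathcal{B}_h)$ lifts to some $y_0\in h^{-1}(x')$ at which $h$ is a local homeomorphism; the relation $f^m(h(M^{-m}y_0))\cdots$ more precisely $h(M^m y_0) = f^m(x') = x'$ forces $M^m y_0$ and $y_0$ to lie in the same $G$-fibre, i.e. $M^m y_0 = \gamma(y_0)$ for some $\gamma\in G$. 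I would then need to promote this to a genuine \emph{fixed} point of a conjugate of $M^m$: writing $\gamma$ in the crystallographic group as an isometry, a power $\gamma^r$ (with $r$ the order of the rotational part, hence bounded by the finite point group of $G$ and so independent of $x'$) composed appropriately with $M^{rm}$ becomes a pure scaling $A(x)=\lambda^{rm}x$ fixing a point $y'$; this is where the uniform $r\in\N$ comes from. Equivalently, after conjugating by the translation $T'$ sending $0$ to the appropriate lift $y'$, the map $\widetilde h = h\circ T'$ satisfies $f^{rm}\circ \widetilde h = \widetilde h\circ A$ with $A(x)=\lambda^{rm}x$, $\widetilde h(0)=x'$, and $\widetilde h$ locally injective near $0$ since $x'\notin h(\mathcal{B}_h)$.

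Next I would verify that $\widetilde h = h\circ T'$ is actually a \emph{linearizer} in the sense defined in the paper, i.e. that $A$ belongs to (or is the relevant representative of) $\mathcal{D}(f^{rm})(x')$, the uqr generalized derivative set at the repelling fixed point $x'$ of $f^{rm}$. For this I would invoke Theorem~\ref{thm:simpdiff}: the hypothesis that $T(x_0,h)$ is simple with a $1$-homogeneous generalized derivative for every $x_0\in h^{-1}(J(f))$ applies in particular to $x_0 = y' = T'(0)$, a point lying over the Julia set; translating, $T(0,\widetilde h)$ is simple with homogeneity factor $d=1$. Theorem~\ref{thm:simpdiff} then yields that $f^{rm}$ is differentiable at $x'$ with $(f^{rm})'(x') = \lambda^{rm}\,\mathrm{Id}$. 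Since the derivative at $x'$ is a scalar multiple of the identity with multiplier $\lambda^{rm}>1$, the point $x'$ is repelling in the classical sense and $\mathcal{D}(f^{rm})(x')$ consists (up to the usual normalization) of the single loxodromic map $A(x)=\lambda^{rm}x$, so $\widetilde h$ genuinely linearizes $f^{rm}$ at $x'$ against this $A$. This simultaneously gives the ``moreover'' clause about differentiability.

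I would then assemble the conclusion: $L = h\circ T'$ linearizes $f^{rm}$ at $x'$, and since every repelling periodic point not on $h(\mathcal{B}_h)$ is handled by the same $r$ (depending only on the finite point group of $G$ and on $M$), we obtain simultaneous linearization of $f$ by $h$ in the sense of the introduction. A few technical points would need care along the way: that the lift $y_0$ can be chosen in $h^{-1}(J(f))$ so that the simplicity/homogeneity hypothesis is available (this follows because $x'\in J(f)$, repelling periodic points being Julia points, and $h^{-1}(J(f))$ is exactly the set over which the hypothesis is assumed); that conjugating $M^{rm}$ by an element of the crystallographic group and by a translation indeed produces a pure dilation $A(x)=\lambda^{rm}x$ rather than a dilation composed with a nontrivial orthogonal part — here one uses that the rotational part has finite order $r$ and that $M=\lambda\mathcal{O}$ with $\mathcal{O}$ orthogonal commutes appropriately, so raising to the power $r$ kills the orthogonal contribution modulo $G$; and that local injectivity of $\widetilde h$ near $0$ together with $\widetilde h(0)=x'$ matches the definition of linearizer.

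The main obstacle I expect is the group-theoretic bookkeeping that extracts the \emph{uniform} $r\in\N$ and shows the conjugated dynamics is exactly $A(x)=\lambda^{rm}x$: one must track how $M^m$ moves fibres of $h$, express the resulting ambiguity as an element $\gamma\in G$, decompose $\gamma$ into translational and orthogonal parts using the crystallographic structure, and check that passing to a bounded power simultaneously (a) makes the orthogonal part trivial and (b) produces a genuine fixed point after translating. Everything analytic is then a direct application of Theorem~\ref{thm:simpdiff} (which itself rests on the chain rule, Theorem~\ref{thm:chainrule}), so the analytic content is essentially already in hand; the work is in the reduction to its hypotheses.
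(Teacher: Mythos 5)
Your proposal follows the paper's own argument in outline: lift $x'$ to $u$ with $M^m(u)=g(u)$ for some $g\in G$, decompose $g$ as a translation composed with a rotation $R\in\Stab(0)$, iterate to a bounded power so the rotational error collapses to a lattice translation that $h$ absorbs, and then feed the $1$-homogeneity hypothesis at $u\in h^{-1}(J(f))$ into Theorem~\ref{thm:simpdiff} (via Theorem~\ref{thm:invgender}) to get differentiability and $(f^{rm})'(x')=\lambda^{rm}\mathrm{Id}$. Two small points of imprecision worth flagging. First, the uniform $r$ must satisfy two independent finiteness conditions: $q=|P|$ (the order of the point group $P=G/\mathcal{T}$) is what kills the rotation $R$, which varies with $x'$, and a separate integer $p$ with $\mathcal{O}^p=\mathrm{Id}$ is what makes $M^{rm}$ a pure dilation; note $\mathcal{O}$ need not lie in $G$, so $p$ is not controlled by $P$, and the paper takes $r=\operatorname{lcm}(p,q)$. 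Your phrase ``the order of the rotational part'' conflates these, and on its own is not $x'$-independent nor does it guarantee $\mathcal{O}^{rm}=\mathrm{Id}$. Second, the claim $f^{rm}\circ(h\circ T')=(h\circ T')\circ(\lambda^{rm}\mathrm{Id})$ is not literal conjugation of $M^{rm}$ by $T'$ (which would produce an affine map with no fixed point); it rests on verifying that $M^{rm}u-u$ lies in the lattice $\Lambda$, which is exactly the inductive computation \eqref{eq:simlininduction} in the paper. You correctly identify this as ``group-theoretic bookkeeping,'' but it is the step that makes the whole argument go through, so it deserves more than a gesture.
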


Here $\mathcal{B}_h$ is the branch set of $h$.

\begin{remark}
\begin{enumerate}[(a)]
\item Siebert \cite[Satz 4.3.4]{Siebert} showed that periodic points are dense in $J(f)$. It is still open whether repelling periodic points are dense in $J(f)$, but it is not hard to see that for uqr solutions of a Schr\"oder equation, this is indeed the case. Consequently, there are very many repelling periodic points to apply Theorem \ref{thm:repperpts} to.
\item A periodic point in $J(f)$ cannot also be a branch point, since we would then obtain a super-attracting fixed point of $f^m$, for some $m\in \N$, lying in $J(f^m)$. This is not allowed, since every super-attracting fixed point of a uqr map is necessarily contained in the Fatou set $F(f)$.
\item An interpretation of the conclusions of Theorem \ref{thm:repperpts} is that uqr solutions of the Schr\"oder equation have {\it real multipliers}, that is, considering a suitable iterate of $f$ at a repelling periodic point yields a differentiable map whose derivative is just a dilation. In \cite{EV}, it is shown that if a rational map $f$ has real multipliers at every repelling periodic point, then $J(f)$ is contained in a circle or a line, or $f$ is a Latt\'es map and $J(f) = \overline{\C}$. The converse to this isn't quite true, consider for example the Latt\'es map $f(z) = (2i)^{-1}(z + z^{-1})$ with fixed points
\[z_0 = \frac{1}{\sqrt{2i-1}} \quad z_1 = -\frac{1}{\sqrt{2i-1}}.\]
The multiplier at each of these fixed points is $\lambda = 1-i$, where $|\lambda| = \sqrt{2}$. Of course, $f^4$ does have real multipliers.
\item In \cite{FM2}, we proved that every uqr solution of the Schr\"oder equation has $J(f)$ equal to either all of $\overline{\R^n}$, a quasi-sphere or a quasi-disk. Strengthening the Schr\"oder equation so the uniformly quasiconformal map is linear, as in the hypotheses of Theorem \ref{thm:repperpts}, does not seem to allow the stronger conclusion that $J(f)$ is all of $\overline{\R^n}$, a sphere or a disk. For example, a standard Zorich construction in $\R^3$ could be modified so that the image of a slice through a fundamental beam is an ellipse. 
\item One of the main assumptions we make in Theorem \ref{thm:repperpts} is that $h$ is simple at every point of interest. Of course, $h$ is simple at least almost everywhere and, typically, writing down a formula for $h$ will yield an appropriate map. The other main assumption we make is that the uqc map $M$ is linear. We require this for our proof to work, so it is worth pointing out how we can pass from an arbitrary Schr\"oder equation to one of this form.
\end{enumerate}
\end{remark}

If $f_1\circ h_1 = h_1\circ A_1$ is a Schr\"oder equation with $h_1$ strongly automorphic with respect to a tame quasiconformal group $G_1$, then we can conjugate everything in the Schr\"oder equation by a quasiconformal map to obtain $f\circ h = h \circ A$, where $h$ is strongly automorphic with respect to a discrete group of isometries $G$.

\begin{proposition}
\label{prop:linear}
With the notation above, if $G$ is a discrete group of isometries with translation subgroup $\mathcal{T}$ and associated lattice $\Lambda = \{ g(0) : g \in \mathcal{T} \}$, 
and $A$ is a loxodromic repelling uqc map satisfying $AGA^{-1} \subset G$, 
then
\begin{enumerate}[(i)]
\item there exists a uniformly quasiconformal linear map $M$ so that $A |_{\Lambda} = M|_{\Lambda}$,
\item $A$ and $M$ are quasiconformally conjugate via a map which is the identity on $\Lambda$,
\item if $f$ is the unique solution to the Schr\"oder equation $f\circ h = h \circ A$, then $f$ also is the unique solution to $f\circ \widetilde{h} = \widetilde{h} \circ M$, where $\widetilde{h} = h \circ \xi$ for some quasiconformal map $\xi$,
\item $A$ can be chosen to be non-linear and satisfy $AGA^{-1} \subset G$.
\end{enumerate}
\end{proposition}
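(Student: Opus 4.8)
The plan is to establish (i)–(iv) in turn, with (ii) carrying all the weight. For (i), the first point is that $A$ conjugates translations to translations. Fix $v\in\Lambda$ and put $\psi_v:=A\circ t_v\circ A^{-1}\in G$, where $t_v(x)=x+v$; since $A$ fixes $0$ and $\infty$ we have $\psi_v(0)=A(v)$ and $\psi_v^{\,k}=A\circ t_{kv}\circ A^{-1}$, so $\psi_v$ has infinite order. In the crystallographic case, $A\mathcal{T}A^{-1}$ is abelian and meets $\mathcal{T}$ in a finite-index, hence full-rank, subgroup, so each of its elements commutes with a full-rank family of translations and therefore has trivial rotational part, i.e. $A\mathcal{T}A^{-1}\subseteq\mathcal{T}$; in the remaining cases the non-translational elements of $G$ are torsion, so the infinite-order element $\psi_v$ is again a translation. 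Either way $\psi_v=t_{A(v)}$, and $\psi_{v+w}=\psi_v\psi_w$ shows $v\mapsto A(v)$ is additive on $\Lambda$; extending $\Z$-linearly gives a linear $M$ with $M|_\Lambda=A|_\Lambda$, $M(\Lambda)\subseteq\Lambda$, and an easy induction yields $A^k|_\Lambda=M^k|_\Lambda$ for all $k\ge 0$. That $M$ is uniformly quasiconformal follows since each $A^k$ is $K$-quasiconformal and hence quasisymmetric with a gauge depending only on $K$ and $n$; because $M^k$ is linear and agrees with $A^k$ on the coarsely dense set $\Lambda$, rescaling and passing to the limit transfers the same gauge to $M^k$ on all of $\R^n$, and a quasisymmetric linear map is $K'$-quasiconformal with $K'=K'(K,n)$.

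The heart of the matter is (ii). I would construct the conjugacy as $\xi:=\lim_{k\to\infty}A^{-k}\circ M^k$. On $\Lambda$ every term is the identity, since $A^{-k}M^k(v)=A^{-k}(A^k(v))=v$. Off $\Lambda$, convergence comes from a telescoping estimate: $A^{-(k+1)}M^{k+1}-A^{-k}M^k=A^{-(k+1)}\circ\big((M-A)\circ M^k\big)$, and because $M-A$ vanishes on $\Lambda$ while $\Lambda$ is coarsely dense, the displacement $(M-A)(M^k x)$ is small relative to the scale $|M^k x|\asymp\rho^k$ at which $A^{-(k+1)}$ operates (here $\rho>1$ since the iterates escape), so the power-type quasisymmetry of $A^{-(k+1)}$ turns this into geometric decay of the successive differences. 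The limit is a locally uniform limit of uniformly quasiconformal homeomorphisms and is nonconstant (it fixes $\Lambda$), hence is itself quasiconformal; it restricts to the identity on $\Lambda$; and $\xi\circ M=\lim A^{-k}M^{k+1}=A\circ\lim A^{-(k+1)}M^{k+1}=A\circ\xi$, so $\xi^{-1}A\xi=M$. In the non-cocompact cases, where $\Lambda$ spans a proper subspace, I would instead obtain $\xi$ by a fundamental-domain construction: transport a quasiconformal homeomorphism between fundamental annuli for the $\langle M\rangle$- and $\langle A\rangle$-actions that fixes the lattice points it meets and respects the twisted boundary gluing, and spread it by the dynamics; since $A^k|_\Lambda=M^k|_\Lambda$ the resulting $\xi$ fixes all of $\Lambda$.

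Part (iii) is then formal. With $\widetilde h=h\circ\xi$, the map $\widetilde h$ is strongly automorphic with respect to the tame quasiconformal group $\xi^{-1}G\xi$, and $M(\xi^{-1}G\xi)M^{-1}=\xi^{-1}(\xi M\xi^{-1})G(\xi M\xi^{-1})^{-1}\xi=\xi^{-1}(AGA^{-1})\xi\subseteq\xi^{-1}G\xi$, so the Schr\"oder equation $f\circ\widetilde h=\widetilde h\circ M$ has a unique solution; that this solution is the original $f$ is a one-line check, $f\circ\widetilde h=(f\circ h)\circ\xi=(h\circ A)\circ\xi=h\circ(\xi\circ M)=\widetilde h\circ M$, using $A\circ\xi=\xi\circ M$. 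For (iv), take any non-affine quasiconformal $\eta:\overline{\R^n}\to\overline{\R^n}$ fixing $0$ and $\infty$ that normalizes $G$ (equivalently, descends to a quasiconformal self-homeomorphism of $\overline{\R^n}/G$) — such maps are plentiful — and set $A':=\eta\circ M\circ\eta^{-1}$; then $A'$ is loxodromic repelling, uniformly quasiconformal, non-linear, satisfies $A'G(A')^{-1}=\eta(MGM^{-1})\eta^{-1}\subseteq\eta G\eta^{-1}=G$, and replacing $h$ by $h\circ\xi\circ\eta^{-1}$ presents $f$ as the solution of the Schr\"oder equation driven by this non-linear $A'$.

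The main obstacle is (ii): producing a single map $\xi$ that is simultaneously quasiconformal, an intertwiner with $\xi^{-1}A\xi=M$, and the identity on $\Lambda$. The convergence and quasiconformality of $\lim A^{-k}M^k$ rest on quantitative distortion and quasisymmetry estimates for the iterates $A^{\pm k}$ together with the precise way $M-A$ vanishes on the lattice, and the argument must be organised so that the uniform constants coming from "$A$ uniformly quasiconformal" and "$\Lambda$ coarsely dense in its span" control everything. Part (i) is routine modulo the structure theory of the relevant discrete isometry groups, and parts (iii)–(iv) are essentially bookkeeping.
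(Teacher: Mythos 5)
Your overall blueprint matches the paper's: reduce $A$ to a linear map $M$ on the lattice, conjugate $A$ to $M$ by a quasiconformal map that fixes $\Lambda$ pointwise, then transfer the Schr\"oder equation, and finally exhibit non-linear $A$'s by conjugating $M$ back through a suitable quasiconformal map. But within that blueprint the details diverge in ways that matter.

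For (i) you argue algebraically: in the crystallographic case Bieberbach gives that $A\mathcal{T}A^{-1}\cap\mathcal{T}$ is full rank, forcing trivial rotational parts; in the rank-$(n-1)$ case you claim all non-translational elements of $G$ are torsion, so the infinite-order element $\psi_v$ must be a translation. That last claim is not justified and is not true for discrete isometry groups with a rank-$(n-1)$ translation subgroup in general --- glide-type and screw-type elements can have infinite order without being translations. The paper instead argues dynamically, iterating $A\circ g_i^m = h_i^m\circ A$ and comparing the growth of $|A(mw_i)|$ with the bounded orbit contributed by a nontrivial rotation part; this works uniformly across the rank-$n$ and rank-$(n-1)$ cases without appealing to group structure theory. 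Your proof of uniform quasiconformality of $M$ (transferring the quasisymmetry gauge from $A^k$ to $M^k$ via agreement on a coarsely dense set) is morally the same idea as the paper's, which approximates the extremal directions of $M^k$ by lattice points and contradicts the uniform quasisymmetry gauge $\eta$; both are sound, yours is just stated more loosely.

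For (ii), you iterate $A^{-k}\circ M^{k}$, whereas the paper iterates $\iota_k=M^{-k}\circ A^{k}$. The paper's direction is the crucially better choice: because $M^{-1}$ is a \emph{linear} contraction with operator norm $\delta<1$, the discrepancy terms decay geometrically and you get a clean Cauchy estimate with no appeal to quasisymmetry. Your direction forces you to control the nonlinear $A^{-(k+1)}$ via the quasisymmetry gauge, and the telescoping estimate requires the error $A-M$ to be $o(|M^kx|)$ at the growing scales. This is fine in the cocompact case (where $\Lambda$ is coarsely dense in $\R^n$ so $A-M$ is globally bounded by a quasisymmetry argument), but in the rank-$(n-1)$ case $\Lambda$ is only coarsely dense in a hyperplane, and your proposed fallback --- a fundamental-domain construction transported by the dynamics --- is only a sketch, whereas the paper's argument handles both cases without modification. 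Also note the identity you write, $A^{-(k+1)}M^{k+1}-A^{-k}M^k = A^{-(k+1)}\circ\big((M-A)\circ M^k\big)$, is not a valid operator identity since $A^{-(k+1)}$ is nonlinear; the correct telescoping compares the arguments fed into $A^{-(k+1)}$, which is what you then estimate.

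For (iii) your computation is correct and does prove the literal claim, but the paper does more: it shows that $\iota$ actually commutes with $G$ (via the agreement on $\Lambda$, propagation to $O^-_M(\Lambda)$, and density/rigidity), so that $\widetilde h$ is strongly automorphic with respect to the \emph{same} discrete isometry group $G$ and not merely the tame conjugate $\xi^{-1}G\xi$. This stronger conclusion is what allows Proposition \ref{prop:linear} to genuinely reduce the setting of Theorem \ref{thm:repperpts} (which assumes a crystallographic $G$ and a linear $M$) from an arbitrary tame Schr\"oder equation; your version leaves the hypothesis on $G$ unrecovered. For (iv), your construction (conjugate $M$ by a non-affine quasiconformal normalizer of $G$) is essentially the paper's (which builds a map commuting with $G$ via Sullivan's annulus theorem), and both are fine.
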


\begin{remark}
\begin{enumerate}[(a)]
\item Observe that in statements such as \cite[Theorem 21.4.1]{IM}, this means that $A = \lambda \mathcal{O}$, where $\lambda >1$ and $\mathcal{O}$ is orthogonal, really is a hypothesis and not a conclusion.
\item Note that Proposition \ref{prop:linear} does not necessarily imply anything about the original maps $f_1$ and $h_1$ since quasiconformal conjugacy does not necessarily preserve simpleness (see \cite{FMWW} for the uqr version of this claim).
\end{enumerate}
\end{remark}

The paper is organized as follows. In section 2, we cover preliminary material on quasiregular mappings, linearizers, Schr\"oder equations and geenralized derivatives of both varieties. In section 3, we prove the claims about generalized derivatives, including the chain and inverse function rules. In section 4, we discuss the Schr\"oder equation $f\circ h = h\circ A$ and associated group $G$. In particular, we prove Proposition \ref{prop:linear} on how we can pass from this Schr\"oder equation to one where the uqc map is linear. Finally, in section 5 we prove our main result on simultaneous linearization, Theorem \ref{thm:repperpts}.

{\it Acknowledgements:} This paper is based on work from the thesis of the second named author. The external examiner for the thesis was Dan Nicks, who provided numerous helpful comments and corrections.

\section{Preliminaries}

\subsection{Quasiregular mappings}

A {\it quasiregular mapping} in a domain $U\subset \R^n$ for $n\geq 2$ is a continuous mapping in the Sobolev space $W^1_{n,loc}(U)$ where there is a uniform bound on the distortion, that is, there exists $K\geq 1$ such that
\[|f'(x)|^n \leq KJ_f(x)\]
almost everywhere in $U$. The minimum such $K$ for which this inequality holds is called the {\it outer dilatation} and denoted by $K_O(f)$. As a consequence of this, there is also $K' \geq 1$ such that 
\[J_f(x) \leq K' \inf_{|h|=1}|f'(x)h|^n\]
holds almost everywhere in $U$. The minimum such $K'$ for which this inequality holds is called the {\it inner dilatation} and denoted by $K_I(f)$. If $K= \max \{K_O(f), K_I(f) \}$, then $K=K(f)$ is the maximal dilatation of $f$. A $K$-quasiregular mapping is a quasiregular mapping for which $K(f) \leq K$. 
The set of points where a quasiregular mapping $f$ is not locally injective is called the branch set, and denoted $\mathcal{B}_f$.
An injective quasiregular mapping is called quasiconformal. The following result states that quasiconformal mappings in $\R^n$ are also $\eta$-quasisymmetric.

\begin{theorem}[Theorem 11.14, \cite{Heinonen}]
\label{thm:qs}
Let $n\geq 2$ and $K\geq 1$. There exists an increasing homeomorphism $\eta :[0,\infty) \to [0,\infty)$ depending only on $n$ and $K$ so that if $f:\R^n\to \R^n$ is $K$-quasiconformal, then
\[ \frac{ |f(x) - f(y)|}{|f(x) - f(z)|} \leq \eta \left ( \frac{ |x-y|}{|x-z|} \right),\]
for all $x,y,z \in \R^n$.
\end{theorem}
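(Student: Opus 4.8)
This is a classical result (going back to Gehring and V\"ais\"al\"a); I would organize a proof around one analytic estimate on conformal moduli of ring domains, after which global quasisymmetry follows by soft arguments. Throughout, $f$ extends to a homeomorphism of $\overline{\R^n}=\R^n\cup\{\infty\}$ fixing $\infty$, and for $x\in\R^n$, $t>0$ I write $L_f(x,t)=\max_{|y-x|=t}|f(y)-f(x)|$ and $l_f(x,t)=\min_{|y-x|=t}|f(y)-f(x)|$, which are finite and positive.

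\emph{Step 1: modulus estimates.} The tool is that a $K$-quasiconformal map distorts the conformal modulus of any ring domain by a factor controlled by $K$, since $K_I(f),K_O(f)\le K$. Applied to the spherical ring $B(x,r)\setminus\overline{B(x,s)}$, whose modulus is $\log(r/s)$, this gives $\operatorname{mod}f\bigl(B(x,r)\setminus\overline{B(x,s)}\bigr)\asymp_{n,K}\log(r/s)$. But this image is a ring separating the continuum $f(\overline{B(x,s)})$ — which contains $f(x)$ and a point at distance $L_f(x,s)$ from $f(x)$ — from the continuum $f(\overline{\R^n}\setminus B(x,r))$ — which contains $\infty$ and a point at distance $l_f(x,r)$ from $f(x)$. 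Teichm\"uller's extremality theorem then bounds its modulus above by $\tau_n\bigl(l_f(x,r)/L_f(x,s)\bigr)$, where $\tau_n$ is the modulus of the Teichm\"uller ring, an increasing homeomorphism of $[0,\infty)$. Combining, there is an increasing homeomorphism $\varphi\colon[0,1]\to[0,\infty)$, depending only on $n$ and $K$, with $L_f(x,s)\le\varphi(s/r)\,l_f(x,r)$ for $0<s\le r$; in particular the linear dilatation $L_f(x,r)/l_f(x,r)$ is bounded by some $H=H(n,K)$ (a two-ring version of the same argument, chained over dyadic scales, gives this cleanly), and crucially $\varphi(u)\to0$ as $u\to0$.

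\emph{Step 2: globalizing to $\eta$-quasisymmetry.} Let $x,y,z\in\R^n$ and set $t=|x-y|/|x-z|$. If $t\le1$, then with $r=|x-z|$, $s=|x-y|$ the estimate of Step 1 gives $|f(x)-f(y)|\le L_f(x,s)\le\varphi(s/r)\,l_f(x,r)\le\varphi(t)\,|f(x)-f(z)|$, and $\varphi(t)\to0$ as $t\to0$. If $t\ge1$ one needs an upper bound $|f(x)-f(y)|\le\eta(t)\,|f(x)-f(z)|$ with $\eta(t)$ possibly large; this is immediate when $|f(x)-f(z)|\ge|f(x)-f(y)|$, and otherwise follows by applying the $t\le1$ inequality to the map $g=f^{-1}$, which is again $K$-quasiconformal, at the triple $(f(x),f(z),f(y))$ and rearranging, yielding $|f(x)-f(y)|/|f(x)-f(z)|\le1/\varphi^{-1}(1/t)$. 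Assembling the two regimes (using the bound $H$ to cover the compact range around $t=1$) and, if necessary, passing to a continuous increasing majorant produces a single increasing homeomorphism $\eta\colon[0,\infty)\to[0,\infty)$, depending only on $n$ and $K$, for which the asserted inequality holds for all $x,y,z\in\R^n$. Equivalently, one may invoke the general metric principle that a homeomorphism of a connected, doubling metric space with uniformly bounded linear dilatation is quasisymmetric.

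\emph{Where the work is.} Step 2 is essentially formal. All the content sits in Step 1, where the two external inputs are the quasi-invariance of the modulus of ring domains under $K$-quasiconformal maps and Teichm\"uller's identification of the extremal ring (one must also keep the inequalities pointing the right way — that $\tau_n$ increases from $0$ to $\infty$, so that a large modulus of the image ring forces $L_f(x,s)/l_f(x,r)$ to be small). The one genuinely delicate point is extracting the \emph{uniform} decay $\varphi(u)\to0$ as $u\to0$ — that the oscillation of $f$ on small spheres is small relative to the ambient distortion, with a bound independent of the centre $x$ — and this is the estimate that refuses to be purely soft.
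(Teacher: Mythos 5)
The paper does not supply a proof of this statement; it is quoted verbatim as Theorem 11.14 of Heinonen's book and used as a black box in the proof of Proposition \ref{prop:linear}(i). So there is no internal proof to compare against. Your outline is, however, exactly the standard argument found in Heinonen (and in V\"ais\"al\"a's lectures): quasi-invariance of ring modulus, Teichm\"uller's extremal-ring upper bound to convert a modulus estimate into a distance-ratio estimate, and then the formal upgrade from bounded linear dilatation to global quasisymmetry using the fact that $f^{-1}$ is again $K$-quasiconformal. You also correctly flag where the analytic content lives: the hard input is that $\tau_n$ is a homeomorphism onto $[0,\infty)$, which is precisely what forces $\varphi(u)\to 0$ as $u\to 0$ with a bound independent of the centre.

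Two small points of precision worth fixing if you wrote this out in full. First, as your own construction shows, $\varphi(u)=1/\tau_n^{-1}\bigl(c(n,K)\log(1/u)\bigr)$ blows up as $u\to 1^-$, so $\varphi$ is an increasing homeomorphism of $[0,1)$ onto $[0,\infty)$, not of $[0,1]$; this is harmless because you only need $\varphi$ near $0$ and you patch with the uniform linear-dilatation bound $H(n,K)$ for ratios near $1$, but the domain should be stated correctly. Second, in Step 2 when you apply the $t\le 1$ estimate to $f^{-1}$ at the triple $(f(x),f(z),f(y))$ you should note that $\varphi^{-1}$ is indeed defined on all of $[0,\infty)$ precisely because $\varphi$ is surjective onto $[0,\infty)$ (which again uses the blow-up at $1^-$), so $1/\varphi^{-1}(1/t)$ makes sense for every $t>1$. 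With those adjustments, the argument is complete and is the standard one.
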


A composition of quasiregular mappings is again quasiregular, and so it makes sense to consider the iteration of quasiregular mappings. In general, the best that can be said is that $K(f\circ g) \leq K(f) \cdot K(g)$, and so typically the distortion goes up under iteration. If there is a uniform bound on the distotion of the iterates, say there exists $K\geq 1$ so that $K(f^m) \leq K$ for all $m\in \N$, then $f$ is said to be {\it uniformly quasiregular}, or {\it uqr} for short.

\subsection{Generalized derivatives at fixed points of uqr maps}\label{sec:t}  

Let $f$ be a uniformly quasiregular mapping which is locally injective near a fixed point $x_0 \in \R^n$. Hinkkanen, Martin and Mayer \cite{HMM} define the set $\mathcal{D}f(x_0)$ of generalized derivatives of $f$ at $x_0$ by the set of limits of
\[ \lim _{k\to \infty} \frac{ f(x_0 + \rho_k x) - f(x_0) }{\rho_k},\]
where $\rho_k \to 0$ as $k\to \infty$. The definition can be extended to the point at infinity via compositions with M\"obius maps.

Since $f$ is locally injective near $x_0$, then $\mathcal{D}f(x_0)$ consists of uqc maps.  
If there is only one element in $\mathcal{D}f(x_0)$, we will call $\mathcal{D}f(x_0)$ {\it simple}.  If $f$ is differentiable at $x_0$, then $\mathcal{D}f(x_0)$ just contains the linear mapping $x\mapsto f'(x_0)x$ and so $\mathcal{D}f(x_0)$ is simple.

We classify a fixed point $x_0$ via the behaviour of maps in $\mathcal{D}f(x_0)$.  Observe this is in analogy with classifying fixed points of holomorphic maps via the multiplier at the fixed point.  Uniformly quasiconformal mappings have the following fixed point classification.

\begin{definition}
Suppose $\varphi: \R^n \to \R^n$ is a uqc map which fixes $0$ and $\infty$.  Then $\varphi$ is called {\it loxodromic repelling} or {\it loxodromic attracting} if  $\varphi^k(x) \to \infty$ locally uniformly on $\R^n\backslash\{0\}$ or $\varphi^k(x) \to 0$ locally uniformly on $\R^n$ respectively.  Otherwise, $\varphi$ is called {\it elliptic}, and the group generated by $\varphi$ is pre-compact.
\end{definition}

We cannot have generalized derivatives of differing types in one infinitesimal space.

\begin{theorem}\cite[Lemma 4.4]{HMM}
If one element $\varphi \in \mathcal{D}f(x_0)$ is loxodromic repelling or loxodromic attracting, then all elements of $\mathcal{D}f(x_0)$ are loxodromic repelling or loxodromic attracting, respectively.  
\end{theorem}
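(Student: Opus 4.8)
The plan is to reduce the statement to a fact about a single generalized derivative together with a comparison between two of them taken at different infinitesimal scales. First I would record the elementary algebra. Writing $F_\rho(x)=\rho^{-1}(f(x_0+\rho x)-f(x_0))$ and using $f(x_0)=x_0$, one has $x_0+\rho F_\rho^{\,j}(x)=f^j(x_0+\rho x)$, hence $F_\rho^{\,m}(x)=\rho^{-1}(f^m(x_0+\rho x)-x_0)$. Since $f$ is uqr there is one $K$ with $K(f^m)\le K$ for all $m$, so every $F_\rho^{\,m}$ is $K$-quasiconformal; passing to limits (compositions of uniformly equicontinuous $K$-quasiconformal maps converge along the factors) gives that each $\varphi\in\mathcal{D}f(x_0)$ is uniformly quasiconformal, that $\varphi^m\in\mathcal{D}f^m(x_0)$, and in fact $\mathcal{D}f^m(x_0)=\{\varphi^m:\varphi\in\mathcal{D}f(x_0)\}$. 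Because $\mathcal{D}f(x_0)$ is a compact family of $K$-quasiconformal maps fixing $0$ and $\infty$ whose members are non-degenerate, the mean radius $\mu(\varphi)$ of $\varphi(\overline{\B^n})$ is a continuous strictly positive function on it, hence bounded above and below; since $r_f(\rho)/\rho=\mu(F_\rho)$ accumulates only on these values as $\rho\to0$, one gets the useful estimate $r_f(\rho)\asymp\rho$ for all small $\rho$, so the blow-ups $F_\rho$ and $F_\rho^{-1}$ distort scale by a bounded amount. Finally, it suffices to prove the loxodromically repelling half: the attracting half follows by applying it to the local inverse $f^{-1}$, again uniformly quasiconformal near $x_0$, which satisfies $\mathcal{D}(f^{-1})(x_0)=\{\varphi^{-1}:\varphi\in\mathcal{D}f(x_0)\}$ (compare Theorem \ref{thm:invgender}), together with the fact that $\varphi$ is loxodromic attracting exactly when $\varphi^{-1}$ is loxodromic repelling.

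So suppose $\varphi_0\in\mathcal{D}f(x_0)$ is loxodromic repelling, say $\varphi_0=\lim_k F_{a_k}$, and let $\psi\in\mathcal{D}f(x_0)$ be arbitrary, say $\psi=\lim_k F_{b_k}$; I want to show $\psi$ is loxodromic repelling, ruling out the elliptic and attracting cases. Passing to a subsequence, assume $b_k/a_k\to\beta\in[0,\infty]$. If $\beta\in(0,\infty)$ this is immediate: from $F_{b_k}(x)=\tfrac{a_k}{b_k}F_{a_k}(\tfrac{b_k}{a_k}x)$ we get $\psi(x)=\beta^{-1}\varphi_0(\beta x)$, so $\psi$ is conjugate to $\varphi_0$ by a dilation and has the same type. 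The real content is therefore the incomparable-scale cases $\beta\in\{0,\infty\}$, which are symmetric; assume $\beta=0$, so $b_k$ is an infinitely finer scale than $a_k$.

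In this case I would argue dynamically, tracking the forward orbit of the small ball $B(x_0,b_k)$ under $f$. Using $r_f(\rho)\asymp\rho$ and the uniform $K$-quasiconformality of every $f^m$, each $f^m(B(x_0,b_k))$ is an $L$-round topological ball about $x_0$ with $L=L(n,K)$ independent of $m$ and $k$, of radius comparable to $r_f^{(m)}(b_k)$, and $r_f^{(m+1)}(b_k)\asymp r_f(r_f^{(m)}(b_k))$ with a uniform constant, so the radii change step by step by the bounded local factors $c_s:=r_f(s)/s$. For each fixed $m$, $f^m(B(x_0,b_k))$ rescales to $x_0+b_k\psi^m(\overline{\B^n})$, while $f^m(B(x_0,a_k))$ rescales to $x_0+a_k\varphi_0^m(\overline{\B^n})$; since $\varphi_0$ is loxodromic repelling, for suitable $m_k\to\infty$ the $a_k$-ball is blown up arbitrarily large compared to $a_k$, and feeding this growth back in at the enlarged scales (legitimate because $r_f\asymp\mathrm{id}$ keeps the one-step quasiconformal errors from compounding) shows $f^m(B(x_0,a_k))\to\infty$. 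The plan is to interpolate along the chain of intermediate scales between $b_k$ and $a_k$: choose a number of steps carrying the $b_k$-ball up to scale $a_k$, match it there with the repelling $a_k$-dynamics, and conclude that $f^m(B(x_0,b_k))\to\infty$ as well; were $\psi$ elliptic or loxodromic attracting, the $b_k$-dynamics would keep the ball at scale $\lesssim b_k\ll a_k$, contradicting this escape.

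The step I expect to be the main obstacle is precisely that interpolation, i.e. controlling the renormalized iterates $F_{b_k}^{\,m_k}$ when $m_k\to\infty$ simultaneously with $k\to\infty$: the convergence $F_{b_k}^{\,m}\to\psi^m$ holds only for $m$ fixed, and a priori the long iterates of $F_{b_k}$ need bear no resemblance to iterates of $\psi$. Overcoming this is exactly where uniform quasiregularity must be used decisively — one must leverage that all the $F_{b_k}^{\,m}$ lie in a single $K$-quasiconformal family, together with $r_f\asymp\mathrm{id}$, to convert ``$\psi$ does not expand'' into a uniform-in-$m$ statement ``$F_{b_k}^{\,m}$ does not expand past a bounded scale'', for instance by a pigeonhole/telescoping argument over the intermediate scales, or by extracting a subsequential limit of $F_{b_k}^{\,m_k}$ and identifying it as an element of $\mathcal{D}f(x_0)$, so that the putative contradiction is produced inside the infinitesimal space itself. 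The remaining pieces — the identities for $F_\rho^{\,m}$, the estimate $r_f(\rho)\asymp\rho$, the dilation-conjugacy in the comparable-scale case, and the reduction of the attracting case to $f^{-1}$ — are routine.
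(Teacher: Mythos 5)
This statement is quoted from \cite[Lemma 4.4]{HMM}; the paper gives no proof of its own, so your attempt has to stand on its own, and it does not yet: there is a genuine gap at exactly the point you flag. The preliminary material is fine (the identity $F_\rho^{\,m}(x)=\rho^{-1}(f^m(x_0+\rho x)-x_0)$, the bound $r_f(\rho)\asymp\rho$ from compactness and non-degeneracy of the limits, and the dilation-conjugacy $\psi(x)=\beta^{-1}\varphi_0(\beta x)$ when $b_k/a_k\to\beta\in(0,\infty)$), but the comparable-scale case is essentially trivial; the entire content of the lemma is the case $b_k/a_k\to 0$ or $\infty$, and there your argument is a plan rather than a proof. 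The specific inference you would need --- ``were $\psi$ elliptic or loxodromic attracting, the $b_k$-dynamics would keep $f^m(B(x_0,b_k))$ at scale $\lesssim b_k$'' --- does not follow from what you have established, because $F_{b_k}^{\,m}\to\psi^m$ only for each fixed $m$, while the contradiction requires control of $F_{b_k}^{\,m_k}$ with $m_k\to\infty$ simultaneously with $k\to\infty$. Uniform quasiconformality of the family $\{F_{b_k}^{\,m}\}$ and $r_f\asymp\mathrm{id}$ by themselves do not convert the pointwise-in-$m$ statement into a uniform one; some renormalization or composition identity relating the blow-ups at scale $b_k$ to those at the intermediate scales actually visited by the orbit (i.e.\ producing new elements of $\mathcal{D}f(x_0)$, or of a semigroup it generates, out of long compositions) is the missing mechanism, and you explicitly defer it (``pigeonhole/telescoping \dots or by extracting a subsequential limit of $F_{b_k}^{\,m_k}$'') without carrying it out. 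As written, the escape of $f^{m}(B(x_0,a_k))$ and the putative confinement of $f^{m}(B(x_0,b_k))$ are never actually brought into conflict.

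Two secondary points need repair as well. First, the identities $\mathcal{D}f^m(x_0)=\{\varphi^m:\varphi\in\mathcal{D}f(x_0)\}$ and $\mathcal{D}(f^{-1})(x_0)=\{\varphi^{-1}:\varphi\in\mathcal{D}f(x_0)\}$ are not automatic: the inclusion $\varphi^m\in\mathcal{D}f^m(x_0)$ and the corresponding statement for inverses require an argument (the scales at which $f^{-1}$ is blown up are only comparable, not equal, to those for $f$, so a priori one only gets inverses up to dilation-conjugacy --- which is enough for your purposes, but should be said and proved). Second, the appeal to Theorem \ref{thm:invgender} is misplaced: that result concerns the mean-radius-normalized infinitesimal space $T(0,\cdot)$ of \cite{GMRV} under a simpleness hypothesis, not the uqr-version $\mathcal{D}f(x_0)$ used in this statement, and $\mathcal{D}f(x_0)$ need not be simple here. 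These are fixable, but the incomparable-scale case is the heart of the lemma and remains unproved.
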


We then have the following classification of fixed points.

\begin{definition}
If $f:\overline{\R^n} \to \overline{\R^n}$ is a uqr map and $f$ is locally injective at a fixed point $x_0$, then $x_0$ is called repelling (respectively attracting) if one, and hence all, elements of $\mathcal{D}f(x_0)$ are loxodromic repelling (respectively attracting) uqc maps.  Otherwise, $x_0$ is a neutral fixed point and hence all elements of $\mathcal{D}f(x_0)$ are elliptic.  If $f$ fails to be locally injective near $x_0$, then we call $x_0$ a {\it super-attracting} fixed point of $f$.
\end{definition}

With this classification of fixed points, we can now define a linearizer of a uqr map.

\begin{definition}
Suppose $f:\overline{\R^n} \to \overline{\R^n}$ is a uqr map with repelling fixed point $x_0 \in \R^n$.  If $L:\R^n \to \overline{\R^n}$ is a quasiregular map, so that $L$ is locally injective near $0$ and for $\varphi \in \mathcal{D}f(x_0)$, $L(0) = x_0$ and $f\circ L = L \circ \varphi$, then $L$ is called a linearizer of $f$ at $x_0$.  
\end{definition}

Via \cite[Theorem 6.3]{HMM}, linearizers always exist. It is straightforward to see that they are not unique, and since quasiregular maps need not be differentiable, we cannot make a normalization such as $L'(0)=1$.

Here, we call $f\circ L = L \circ \psi$ the {\it linearizer equation}.  It is important to note that we start with a uniformly quasiregular map $f$, a repelling fixed point $x_0$ and generalized derivative $\psi \in \mathcal{D}f(x_0)$ and end up with a linearizer $L$.  However, when considering concrete examples of linearizer equations, we run into the issue of the construction of uqr mappings, which is generally challenging. One method for constructing whole families of uqr maps is via the Schr\"oder equation.

\subsection{Strongly automorphic mappings and the Schr\"oder equation}  

\begin{definition}
A quasiregular mapping $h:\R^n \to \overline{\R^n}$, for $n\geq 2$, is called {\it strongly automorphic} with respect to a quasiconformal group $G$
if the following two conditions hold:
\begin{enumerate}[(i)]
\item $h\circ g = h$ for all $g\in G$,
\item $G$ acts transitively on the fibres $h^{-1}(y)$, that is, if $h(x_1) = h(x_2)$, then there exists $g\in G$ such that $x_2=g(x_1)$.
\end{enumerate}
\end{definition}

The most common groups considered in the literature are discrete groups of isometries. We consider a generalization of these.

\begin{definition}
A quasiconformal group $G$ acting on $\R^n$ is called {\it tame} if there exists a quasiconformal mapping $\varphi : \R^n \to \R^n$ and a discrete group of isometries $G'$ acting on $\R^n$ so that $G = \varphi G'\varphi^{-1}$
\end{definition}

The following classification of quasiregular mappings which are strongly automorphic with respect to a tame quasiconformal group can be found in \cite{FM2}, and we refer there for further references.
If $h$ is strongly automorphic with respect to a tame quasiconformal group $G$, then $G$ is quasiconformally conjugate to a discrete group of isometries $G_1$. Then $G_1$ has a maximal translation subgroup $\mathcal{T}$ which necessarily must be of rank $n-1$ or $n$. The map $h$ then falls into one of three categories:
\begin{enumerate}[(i)]
\item $h$ is of Zorich-type if $\mathcal{T}$ has rank $n-1$, a fundamental set for the action of $\mathcal{T}$ on $\R^n$ is a beam $B$ of the form $X\times \R$, where $X$ is an $(n-1)$-polytope and there is no rotation in $G'$ switching the prime ends of $B$,
\item $h$ is of sine-type if the above case holds, but there is a rotation in $G'$ switching the prime ends of $B$,
\item $h$ is of $\wp$-type if $\mathcal{T}$ has rank $n$.
\end{enumerate}

A key reason for considering strongly automorphic quasiegular mappings is that they can be used to construct uqr maps through a Schr\"oder functional equation.

\begin{theorem}\cite[Theorem 3.2]{FM2}
\label{thm:schroder}
Suppose $h:\R^n \to \overline{\R^n}$ is strongly automorphic with respect to a tame quasiconformal group $G$. Further suppose that there is a loxodromically repelling uniformly quasiconformal mapping $A$ satisfying $A(0)=0$ and 
\[ AGA^{-1} \subset G.\]
Then there is a unique non-injective uqr map $f:\overline{\R^n} \to \overline{\R^n}$ which solves the Schr\"oder equation
\[ f\circ h = h\circ A.\]
\end{theorem}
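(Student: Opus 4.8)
The plan is to write down the only possible candidate for $f$ and then verify, one property at a time, that it works. For $y$ in the image of $h$ put
\[ f(y) := h(A(x)), \qquad x \text{ any point of } h^{-1}(y). \]
First I would check this is well defined: if $h(x_1)=h(x_2)=y$, transitivity of $G$ on fibres gives $g\in G$ with $x_2=g(x_1)$, whence $A(x_2)=(AgA^{-1})(A(x_1))$ with $AgA^{-1}\in G$ by hypothesis, and so $h(A(x_2))=h(A(x_1))$ since $h\circ(AgA^{-1})=h$. A strongly automorphic map of Zorich-, sine- or $\wp$-type has image equal to $\overline{\R^n}$ minus a set of conformal capacity zero, so $f$ is defined off such a set; one then checks (this is the delicate point, see below) that $f$ extends to a continuous map $\overline{\R^n}\to\overline{\R^n}$. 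Uniqueness of the solution is then immediate: any $f$ satisfying $f\circ h=h\circ A$ agrees with the candidate on the dense set $h(\R^n)$, hence everywhere.

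Next, $f$ is quasiregular. Off $\mathcal{B}_h$ and its image, $h$ admits local quasiconformal inverse branches $\varphi$ with $K(\varphi)=K(h)$, and there $f=h\circ A\circ\varphi$ is a composition of quasiregular maps, hence $K$-quasiregular with $K\le K(h)^2K(A)$. Since $f$ is globally continuous and $\mathcal{B}_h$ together with its image, the values omitted by $h$, and the point $\infty$ form a set of conformal capacity zero, the removability theorem for maps of bounded distortion upgrades this to quasiregularity of $f$ on all of $\overline{\R^n}$.

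The heart of the theorem is uniform quasiregularity. Iterating the functional equation gives $f^m\circ h=h\circ A^m$ for every $m\in\N$, so the same local-inverse argument yields $f^m=h\circ A^m\circ\varphi$ locally, and therefore
\[ K(f^m)\ \le\ K(h)^2\,K(A^m)\ \le\ K(h)^2\,\sup_{k\in\N}K(A^k)<\infty, \]
the last supremum being finite precisely because $A$ is uniformly quasiconformal; thus $K(f^m)$ is bounded uniformly in $m$. Finally, $f$ is non-injective: if $f$ were injective then, cancelling it in $f\circ h=h\circ A$, the maps $h$ and $h\circ A$ would have the same fibres, which unwinds to $AGA^{-1}=G$, i.e.\ $A$ would normalize the discrete group $G$; but a loxodromic repelling $A$ strictly expands and so cannot preserve $G$ (it would have to preserve, under conjugation, the covolume of the lattice associated to the translation subgroup of $G$).

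I expect the main obstacle to be none of the analytic steps above --- these are routine uses of transitivity of $G$, the submultiplicativity $K(F\circ H)\le K(F)K(H)$, and removability --- but rather the bookkeeping at the degenerate points: extending $f$ continuously, and hence quasiregularly, across $\mathcal{B}_h$, across the values omitted by $h$, and across the point $\infty$, and checking along the way that branch points of $h$, and points where $A$ meets these degenerate sets, are handled consistently. Ruling out the equality $AGA^{-1}=G$ cleanly (so that $f$ is genuinely non-injective) is the other point needing care, and it is precisely where the loxodromic-repelling hypothesis on $A$ does its work.
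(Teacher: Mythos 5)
This result is quoted from \cite[Theorem 3.2]{FM2} and not proved in the present paper, so there is no internal proof to compare against; I can only evaluate your proposal on its own terms. Your outline is the standard (and correct) construction. Well-definedness from transitivity of $G$ on fibres together with $AGA^{-1}\subset G$, uniqueness from density of $h(\R^n)$ in $\overline{\R^n}$, local quasiregularity via $f=h\circ A\circ\varphi$ for a local inverse branch $\varphi$ of $h$, and then removability across $h(\mathcal{B}_h)$ and the finite omitted set --- all of this is right, and you have correctly identified the extension/removability bookkeeping as the genuinely delicate part. Most importantly, the uniform dilatation estimate
\[ K(f^m)\ \le\ K(h)^2\,K(A^m)\ \le\ K(h)^2\,\sup_{k}K(A^k)<\infty \]
is precisely the mechanism by which uniform quasiconformality of $A$ produces uniform quasiregularity of $f$, and you have stated it correctly.

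The one step that does not survive scrutiny as written is non-injectivity. Your reduction --- injectivity of $f$ forces the fibres of $h$ and of $h\circ A$ to coincide, hence $AGA^{-1}=G$ --- is fine, but the ``covolume of the lattice'' heuristic is not directly available: $A$ is only quasiconformal, so conjugation by $A$ need not send isometries to isometries, and in the tame case $G$ itself is only a quasiconformal group, so there is no lattice until you conjugate. (It is true that $A$ agrees with a linear uqc map on the lattice, but that is Proposition~\ref{prop:linear}(i) of the present paper --- a nontrivial result one should not presuppose when proving the cited theorem.) A self-contained fix: after conjugating so that $G$ is a discrete group of isometries and $A(0)=0$, suppose $AGA^{-1}=G$. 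The translation subgroup $\mathcal{T}$ is characteristic, so $A\mathcal{T}A^{-1}=\mathcal{T}$; evaluating $A\circ g = g'\circ A$ at $0$ with $g(x)=x+w$, $g'(x)=x+w'$ gives $A(w)=w'$, so $A$ restricts to a bijection of $\Lambda=\{g(0):g\in\mathcal{T}\}$ onto itself. Take any $w\in\Lambda\setminus\{0\}$. Since $A$ is loxodromic repelling, $A^{-k}(w)\to 0$, yet $A^{-k}(w)\in\Lambda$ for all $k$; discreteness of $\Lambda$ with $0\in\Lambda$ forces $A^{-k}(w)=0$ for large $k$, hence $w=A^k(0)=0$, a contradiction. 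So $AGA^{-1}\subsetneq G$ and $f$ is non-injective, as you wanted.
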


Here, we are given a strongly automorphic mapping $h$ and a loxodromic repelling uqc map $A$ satisfying the group invariance $AGA^{-1} \subset G$, and then implicitly defining a uqr map $f$ via the Schr\"oder equation.  Since we have three types of strongly-automorphic mappings, there are three types of uqr mappings which satisfy the Schr\"oder equation: power type, Chebyshev-type or Latt\`es type when $h$ is Zorich-type, sine-type or $\wp$-type respectively.  

Note that the linearizer equation $f\circ L = L \circ \psi$ is always a Schr\"oder equation.  Yet, not every Schr\"oder equation is a linearizer equation (see \cite[Section 4]{FM2}).  Hence, if we wish to study linearizers of a uqr map which arises as a solution to a Schr\"oder equation involving a strongly automorphic mapping, we need to determine whether we can pass from the Schr\"oder equation to the linearizer equation.

\subsection{Generalized derivatives of quasiregular mappings} 

Finally in this section, we recall material on generalized derivatives for arbitrary quasiregular maps from \cite{GMRV}.

If $f:\R^n \to \R^n$ is a non-constant quasiregular mapping, let $r_f(x_0,t)$ be the mean radius of the image of the ball of radius $t$ centered at $x_0$ under $f$, that is
\[ r_f(x_0,t) = \left ( \frac{| f(B(x_0,t))|}{\Omega_n} \right )^{1/n},\]
where $|E|$ denotes the $n$-dimensional volume of a set $E \subset \R^n$ and $\Omega_n$ is the volume of the unit ball in $\R^n$. Since the point $x_0$ is usually clear, we write $r_f(t)$ for brevity.
Then the infinitesimal space of $f$ at $x_0$ is
\[ T(x_0,f) = \left \{ \text{ limits of subsequences of } \frac{f(x_0+t_k x)-f(x_0) }{r_f(t_k)} \right \},\]
as $k\to \infty$, where $t_k \to 0$ as $k\to \infty$.  

If $f$ is differentiable at $x_0$ with non-degenerate derivative, then $T(x_0,f)$ just contains the normalized derivative $x \mapsto (f'(x_0)/J_f(x_0)^{1/n}) x$.
If $T(x_0,f)$ contains only one mapping $g$, then $T(x_0,f)$ is called {\it simple}.  By \cite[Theorem 4.1]{GMRV}, if $T(x_0,f)$ is simple, then $g \in T(x_0,f)$ is $d$-homogeneous, that is, there exists $d>0$ such that for all $r>0$, 
\begin{equation}
\label{eq:hom} g(rx) = r^dg(x),
\end{equation}
for all $x\in \R^n$.

\section{Generalized derivatives in simple infinitesimal spaces}

In this section, we prove the chain rule and inverse function formula for generalized derivatives. The first few subsections contain preparatory material building up to these results.

\subsection{Asymptotic behaviour}

Throughout, we will be using the equivalence relation $\sim$ as in \cite{GMRV} where for $v,w: U \to \R^n$ and $U$ a domain containing zero, then 
\[ v(x) \sim u(x) \text{ as } x \to 0\]
means
\[ |v(x) - u(x)| = o(|v(x)| + |u(x)|),\]
where for functions $f,g:\R\to \R$, $f(x) = o (g(x))$ as $x \to 0$ if and only if given $\epsilon > 0$, there exists $\delta > 0$ so that 
\[\frac{|f(x)|}{|g(x)|} < \epsilon \text{ for } |x| < \delta.\]

In fact, if $v(x) \sim u(x)$, then we equivalently have that
\begin{equation}\label{eq:newsim1}
|v(x) - u(x)| = o(|v(x)|)
\end{equation}
and
\begin{equation}\label{eq:newsim2}
|v(x) - u(x)| = o(|u(x)|).
\end{equation}

It is easy to see that $\sim$ is an equivalence relation.  In fact, $\sim$ is an equivalence relation which holds under the composition of quasiconformal mappings.

\begin{lemma}\label{lem:gp}
Let $U$ be a domain which contains zero and let $g,u,v : U \to \R^n$ be quasiconformal maps which fix zero.  If $u \sim v$, then $g\circ u \sim g\circ v$ and $u\circ g \sim v \circ g$.
\end{lemma}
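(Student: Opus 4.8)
The plan is to prove the two assertions separately, as they are of rather different difficulty. For the composition on the right, $u\circ g\sim v\circ g$, I would use only continuity and injectivity of $g$ together with $g(0)=0$: given $\epsilon>0$, the hypothesis $u\sim v$ furnishes $\delta>0$ such that $|u(y)-v(y)|<\epsilon\bigl(|u(y)|+|v(y)|\bigr)$ for all $y$ with $0<|y|<\delta$ (here $|u(y)|>0$ for $y\neq0$ since $u$ is injective and fixes $0$). Continuity of $g$ at $0$ with $g(0)=0$ gives $\delta'>0$ with $0<|g(x)|<\delta$ whenever $0<|x|<\delta'$, and substituting $y=g(x)$ yields $|u(g(x))-v(g(x))|<\epsilon\bigl(|u(g(x))|+|v(g(x))|\bigr)$ on the punctured ball of radius $\delta'$. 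Hence $u\circ g\sim v\circ g$, and notably this direction does not use quasisymmetry of any of the maps.

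The substantive direction is $g\circ u\sim g\circ v$, where the tool is the quasisymmetry of $g$ (Theorem \ref{thm:qs}). Fixing a ball $B=B(0,r)$ with $\overline{B}\subset U$ on which $g$ is $\eta$-quasisymmetric for some increasing homeomorphism $\eta\colon[0,\infty)\to[0,\infty)$ — so $\eta$ is continuous at $0$ with $\eta(0)=0$ — I would apply the quasisymmetry inequality with its three points equal to $u(x)$, $v(x)$, $0$, using $g(0)=0$, to get
\[
\frac{|g(u(x))-g(v(x))|}{|g(u(x))|}\le \eta\!\left(\frac{|u(x)-v(x)|}{|u(x)|}\right)
\]
for $x$ close enough to $0$ that $u(x),v(x)\in B\setminus\{0\}$ (possible since $u,v$ are continuous, injective and fix $0$). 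Since $u\sim v$ implies, via \eqref{eq:newsim1}, that $|u(x)-v(x)|/|u(x)|\to 0$ as $x\to 0$, continuity of $\eta$ at $0$ forces the right-hand side to $0$, so $|g(u(x))-g(v(x))|=o(|g(u(x))|)$. Exchanging the roles of $u$ and $v$ and using \eqref{eq:newsim2} analogously gives $|g(u(x))-g(v(x))|=o(|g(v(x))|)$, and together these two statements are precisely $g\circ u\sim g\circ v$.

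The one point requiring care is the quasisymmetry step: Theorem \ref{thm:qs} is stated for quasiconformal self-maps of $\R^n$, whereas $g$ is quasiconformal only on the domain $U$. I would resolve this by working on a small ball around $0$ and invoking the local quasisymmetry of quasiconformal mappings — this costs nothing here, since the lemma is purely about behaviour as $x\to 0$ and no quantitative control on $\eta$ is needed, only that it is an increasing homeomorphism fixing $0$. Beyond that the proof is a short verification, once one records that $u(x),v(x),g(u(x)),g(v(x))$ are all nonzero for $x\neq0$ near $0$, which follows from injectivity of $u,v,g$ and the fact that all three fix the origin.
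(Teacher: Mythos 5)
Your proof is correct, and the two halves fare differently when compared with the paper's own argument. For $u\circ g\sim v\circ g$ you give essentially the same proof as the paper: continuity of $g$ at $0$ together with $g(0)=0$ lets one substitute $y=g(x)$ into the defining inequality of $u\sim v$. For the more substantive equivalence $g\circ u\sim g\circ v$, the paper does not give a proof at all — it simply cites \cite[Theorem 2.18]{GMRV2} — whereas you supply a self-contained argument by applying the quasisymmetry estimate (Theorem~\ref{thm:qs}) with the triple of points $u(x),\,v(x),\,0$. That argument is sound: $u\sim v$ forces $|u(x)-v(x)|/|u(x)|\to 0$, the modulus $\eta$ is an increasing homeomorphism of $[0,\infty)$ so $\eta$ is continuous at $0$ with $\eta(0)=0$, and combining the two one-sided $o$-estimates (from swapping $u$ and $v$) reproduces the definition of $\sim$. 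You are also right to flag that Theorem~\ref{thm:qs} as stated is for global quasiconformal self-maps of $\R^n$ while $g$ is quasiconformal only on the domain $U$; the local quasisymmetry of quasiconformal maps (which is in fact the content of \cite[Theorem 11.14]{Heinonen}) covers this, and since no quantitative control on $\eta$ is needed, only that $\eta(t)\to 0$ as $t\to 0$, there is no loss. The upshot: your route buys a self-contained proof where the paper outsources the key step, at the mild cost of invoking local quasisymmetry explicitly.
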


\begin{proof}
The first equivalence is due to \cite[Theorem 2.18]{GMRV2}.  Let $V$ be the connected component of $U \cap g^{-1}(U)$ which contains $0$.  Clearly, $V$ is non-empty, since $g$ fixes $0$.  Then, since $g$ is a continuous and open map, there exists an open neighbourhood $N$ of zero, such that $N \subset V$. 

Pick $\epsilon > 0$.  Since $u \sim v$, there exists a neighbourhood $W \subset N$ of $0$ such that 
\[|v(x) - u(x)| \leq \epsilon(|u(x)| + |v(x)|),\] 
for any $x \in W$.  Hence, for $x \in W \cap g^{-1}(W)$, 
\[|v(g(x)) - u(g(x))| \leq \epsilon(|u(g(x))| + |v(g(x))|).\]

Since $g$ is continuous and $g$ fixes $0$, then $g(x) \to 0$ as $x \to 0$.  Hence, $v(g(x)) \sim u(g(x))$ as $x \to 0$.

\end{proof}

\subsection{Radial homogeneity of the mean radius function}  

By \cite[\S 4]{GMRV}, if $f$ is simple at $0$, then $r_f$ is $d$-homogeneous for some $d>0$, that is, for a fixed $s>0$ then
\begin{equation}
\label{eq:meanhom}
r_f(st) = s^dr_f(t)(1+o(1))
\end{equation}
as $t\to 0$. We show an analogous result is true for the inverse.

\begin{lemma}\label{lem:meanhom}
Suppose $f:\R^n \to \R^n$ is quasiregular, $f(0) = 0$ and $T(0,f)$ is simple.  Further, suppose $d > 0$ is the homogeneity factor of $g \in T(0,f)$.  Then, for fixed $s > 0$, 
\[r_f^{-1}(st) \sim s^{1/d} r_f^{-1}(t)\] 
as $t \to 0$.
\end{lemma}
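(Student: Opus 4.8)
Since $r_f(t)$ is a nonnegative real number for each $t>0$ and $r_f^{-1}(st),r_f^{-1}(t)>0$ for small $t$, the claimed equivalence $r_f^{-1}(st)\sim s^{1/d}r_f^{-1}(t)$ is just the statement
\[
\frac{r_f^{-1}(st)}{r_f^{-1}(t)}\longrightarrow s^{1/d}\qquad\text{as }t\to 0 .
\]
The plan is to prove this by a two-sided $\epsilon$-comparison, using only the radial homogeneity \eqref{eq:meanhom} of $r_f$ applied at \emph{fixed} dilation factors, together with the monotonicity of $r_f^{-1}$. First I would record that $r_f$ is genuinely invertible near $0$: it is continuous and non-decreasing because $B(0,t_1)\subset B(0,t_2)$ forces $f(B(0,t_1))\subset f(B(0,t_2))$; it satisfies $r_f(0^+)=0$ since $f$ is continuous with $f(0)=0$; and it is strictly increasing on some $(0,t_0)$ because \eqref{eq:meanhom} with a factor $s>1$ precludes any interval of constancy. (Alternatively one cites \cite[\S4]{GMRV}.) Thus $r_f:(0,t_0)\to(0,r_f(t_0))$ is an increasing homeomorphism and $r_f^{-1}$ is increasing.

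For the main estimate, fix $s>0$ and $\epsilon>0$, and write $u=r_f^{-1}(t)$, so that $t=r_f(u)$ and $u\to 0$ as $t\to 0$. Since $r_f^{-1}$ is increasing, the inequality $r_f^{-1}(st)\le s^{1/d}(1+\epsilon)\,r_f^{-1}(t)$ is equivalent to $s\,r_f(u)\le r_f\big(s^{1/d}(1+\epsilon)u\big)$. Applying \eqref{eq:meanhom} with the fixed constant $\sigma=s^{1/d}(1+\epsilon)$ gives
\[
r_f\big(s^{1/d}(1+\epsilon)u\big)=s(1+\epsilon)^d\,r_f(u)\,(1+o(1))\qquad\text{as }u\to 0,
\]
and because $(1+\epsilon)^d>1$ the right-hand side exceeds $s\,r_f(u)$ once $u$ (equivalently $t$) is small enough. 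Symmetrically, using the fixed constant $s^{1/d}(1-\epsilon)$ and $(1-\epsilon)^d<1$, one gets $r_f^{-1}(st)\ge s^{1/d}(1-\epsilon)\,r_f^{-1}(t)$ for small $t$. Hence for every $\epsilon>0$ there is $t_0>0$ with
\[
s^{1/d}(1-\epsilon)\le\frac{r_f^{-1}(st)}{r_f^{-1}(t)}\le s^{1/d}(1+\epsilon)\qquad(0<t<t_0),
\]
and letting $\epsilon\to 0$ yields the lemma.

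The only genuinely delicate point is that the $o(1)$ term in \eqref{eq:meanhom} is quantified for a single fixed dilation factor, so it cannot legitimately be fed the variable ratio $r_f^{-1}(st)/r_f^{-1}(t)$; the two-sided comparison with the constants $s^{1/d}(1\pm\epsilon)$ is precisely the device that confines every use of \eqref{eq:meanhom} to fixed factors. The other thing to be careful about — minor, but needing a sentence — is the strict monotonicity of $r_f$ near $0$, which is what makes $r_f^{-1}$ and the equivalences "$a\le b\iff r_f(a)\le r_f(b)$" available in the first place.
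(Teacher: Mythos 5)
Your proposal is correct and uses the same mechanism as the paper's proof: invert the inequality by applying the increasing function $r_f$, then invoke the radial homogeneity \eqref{eq:meanhom} with the fixed dilation factor $s^{1/d}(1\pm\epsilon)$ so that the $o(1)$ term is harmless. The only difference is presentational — the paper phrases it as a contradiction (a sequence $t_j$ violating the asymptotic by a fixed $\delta$) while you give the direct two-sided $\epsilon$-estimate — and both handle the "fixed factor" subtlety identically.
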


\begin{proof}
Let $r = r_f$.  Suppose $r^{-1}(st) \not\sim s^{1/d}r^{-1}(t)$.  Then for $s>0$ and $t \to 0$,
\[ r^{-1}(st) \not =  s^{1/d}r^{-1}(t)(1+o(1)).\]
As a consequence, there exist a sequence $t_j \to 0$ and $\delta > 0$ such that either 
\begin{enumerate}
\item $r^{-1}(st_j) \geq s^{1/d}r^{-1}(t_j)(1+\delta)$, or
\item $r^{-1}(st_j) \leq s^{1/d}r^{-1}(t_j)(1- \delta)$.
\end{enumerate}

Since $r$ is an increasing, continuous function with $r(0) = 0$, see \cite[Lemma 3.1]{FW}, then $r^{-1}$ is an increasing, continuous function.

In case (i), after applying $r$ to both sides of the inequality, since $r$ is increasing and by (\ref{eq:meanhom}),
\[st_j \geq r(s^{1/d}r^{-1}(t_j)(1+\delta)) = s(1+\delta)^d(1+o(1))t_j,\]
for $j$ sufficiently large, which then implies that $1 \geq (1+\delta)^d(1+o(1))$. This is a contradiction.

In case (ii), due to (\ref{eq:meanhom}),
\[st_j \leq  r(s^{1/d}r^{-1}(t_j)(1-\delta)) = s(1-\delta)^d(1+ o(1))t_j,\]
for $j$ sufficiently large, which then implies that $1 \leq (1-\delta)^d(1+o(1))$.  Again, this is a contradiction.
\end{proof}

\subsection{Simple infinitesimal spaces}  
Throughout this section we assume that $f:\R^n \to \R^n$ is quasiregular, $f$ fixes $0$ and $T(0,f)$ is simple. In this setting, \cite[Theorem 4.1]{GMRV} states that $f$ has the asymptotic representation
\begin{equation}\label{eq:fasymp}
f(x) \sim D(x)= r_f (|x|)g \left (\frac{x}{|x|} \right )
\end{equation}
as $x \to 0$.
  
Note, since $g$ preserves the measure of the unit ball, this asymptotic representation of $f$ is analogous to the fact that given a domain $U$ containing zero, a holomorphic function $f:U \to \C$ which fixes zero is well-approximated by a linear map
\[f(z) \sim f'(0)z,\]
 where $f'(0)$ is a scaling composed with a rotation. A geometric interpretation of \eqref{eq:fasymp} is that multiplication by $r_f(|x|)$ gives a scaling map, and $g(x/|x|)$ gives the surface for which infinitesimal balls centered at $0$ are homeomorphic to under $f$.  We then immediately have $r_D(t) = r_f(t)$ for $t>0$ since
\begin{equation}\label{eq:Dfmeanrad}
|D(B(0,t))| = |r_f(t)g(B(0,1))| = (r_f(t))^n\Omega_n.
\end{equation}

In fact, the infinitesimal spaces of $f$ and its asymptotic representation coincide.

\begin{lemma}\label{lem:asymptinf}
If $f$ fixes $0$, $T(0,f)$ is simple and $D$ is given by (\ref{eq:fasymp}), then $T(0,D) = T(0,f)$.
\end{lemma}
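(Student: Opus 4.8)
The plan is to show the two infinitesimal spaces coincide by showing that the defining sequences for $T(0,D)$ are, up to the normalization $r_D(t_k) = r_f(t_k)$, asymptotically equal to those for $T(0,f)$, and then invoking that $\sim$-equivalent sequences have the same subsequential limits. Concretely, let $t_k \to 0$ and consider $D_k(x) = D(x_0 + t_k x)/r_D(t_k)$ and $f_k(x) = f(x_0 + t_k x)/r_f(t_k)$ (with $x_0 = 0$ here). Since $r_D(t) = r_f(t)$ by \eqref{eq:Dfmeanrad}, the normalizing factors agree, so it suffices to compare numerators. By \eqref{eq:fasymp} we have $f(y) \sim D(y)$ as $y \to 0$, i.e.\ $|f(y) - D(y)| = o(|f(y)| + |D(y)|)$; I would like to transfer this into a statement about $f_k$ and $D_k$ as $k \to \infty$, uniformly on compact subsets of $\R^n$.

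The key step is a \emph{uniform-on-compacta} version of the asymptotic equivalence. Fix $R > 0$ and $x$ with $|x| \le R$. Then for the point $y = t_k x$ we have $|y| \le R t_k \to 0$, so the $\sim$ relation applies; but I need the resulting $o(\cdot)$ estimate to be uniform in $x$ over $|x| \le R$, which requires a little care since the $o$ in \eqref{eq:fasymp} is only pointwise in $y \to 0$. I expect the right way to handle this is to use the explicit form $D(y) = r_f(|y|) g(y/|y|)$ together with the $d$-homogeneity of $g$ and the near-homogeneity \eqref{eq:meanhom} of $r_f$: this gives $D(t_k x) = r_f(t_k |x|) g(x/|x|) = |x|^d r_f(t_k)(1 + o(1)) g(x/|x|) = r_f(t_k)(g(x) + o(1))$ as $k\to\infty$, where the $o(1)$ is uniform for $|x| \le R$ because \eqref{eq:meanhom} holds with $s = |x|$ ranging over the compact set $[R^{-1}? , R]$ — more precisely one needs \eqref{eq:meanhom} to be locally uniform in $s$, which follows from monotonicity of $r_f$ squeezing between two homogeneity estimates (a standard Dini-type argument). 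Hence $D_k(x) \to g(x)$ locally uniformly, so $T(0,D) = \{g\}$ and in particular $D$ is simple with $T(0,D) = T(0,f)$.

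The main obstacle is precisely upgrading the pointwise-in-$y$ asymptotic $f \sim D$ (or rather the pieces of $D$) to a limit that is uniform on compact sets of $x$, since that is what is needed to conclude convergence of the rescaled maps. The cleanest route may avoid dealing with $f$ directly at all: since $r_D = r_f$ and $D$ is given by an \emph{explicit} formula, one computes $T(0,D)$ from scratch using \eqref{eq:meanhom} and the homogeneity of $g$, obtaining $T(0,D) = \{g\}$; then \cite[Theorem 4.1]{GMRV} applied to $f$ (which holds because $T(0,f)$ is simple) identifies $g$ as the unique element of $T(0,f)$, giving equality. A secondary technical point is justifying that $D$ is itself quasiregular (or at least that the infinitesimal-space formalism applies to it) so that $T(0,D)$ is well-defined — this should follow from $D$ being a ``radial stretch'' of the quasiconformal-type map $g$, but it is worth a sentence. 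One should also note that the normal-family machinery guaranteeing subsequential limits exist applies equally to $D$, so the statement $T(0,D) = T(0,f)$ is not vacuous.
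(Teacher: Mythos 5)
Your ``cleanest route'' paragraph is essentially the paper's proof: compute $T(0,D)$ directly from the explicit formula $D(tx)/r_D(t) = r_f(t|x|)\,g(x/|x|)/r_f(t)$, apply \eqref{eq:meanhom} with $s=|x|$ and the $d$-homogeneity of $g$, and conclude $D(tx)/r_D(t) \sim g(x)$ as $t\to 0$, hence $T(0,D)=\{g\}=T(0,f)$. The paper does exactly this, and does not spell out the uniformity-on-compacta point you flag. Two remarks on that point. First, your concern is legitimate but mild: once one knows the rescaled family is normal (so subsequential limits exist locally uniformly and are continuous), the pointwise identification $D(tx)/r_D(t)\to g(x)$ already forces every subsequential limit to equal $g$, so locally uniform convergence of the $o(1)$ term is not strictly required for the conclusion. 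Second, your first proposed route --- showing the rescaled sequences for $f$ and $D$ are $\sim$-equivalent and deducing $T(0,D)=T(0,f)$ from that --- is precisely the content of the paper's later Lemma~\ref{lem:simlim}; so that alternative is also sound, but it would be circular to use it here, as the paper's development has Lemma~\ref{lem:asymptinf} precede Lemma~\ref{lem:simlim} and the proof of the latter is independent of the former.
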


\begin{proof}
Let $g \in T(0,f)$. Observe, for $x \not= 0$, since $g$ is positively $d$-homogeneous, and $r_f(st) \sim s^dr_f(t)$ as $t \to 0$, then
\begin{align*}  \frac{D(tx)}{r_D(t)} &= \frac{r_f(t|x|)g(tx/|tx|)}{r_f(t)}\\
& \sim \frac{|x|^dr_f(t)g(x/|x|)}{r_f(t)}\\
& \sim g(x) \text{ as } t\to 0.
\end{align*}
Hence, $T(0,D) = \{g\}$.  
\end{proof}

\subsection{The chain rule}

We now prove Theorem \ref{thm:chainrule}.

\begin{proof}[Proof of Theorem \ref{thm:chainrule}]
Let $g_f$ and $g_h$ be the generalized derivatives for $f$ and $h$ respectively at $0$.  By (\ref{eq:fasymp}), 
\[f(x) \sim D_f(x)=  r_f(|x|)g_f(x/|x|), \quad h(x) \sim D_h(x) = r_h(|x|)g_h(x/|x|)\]
as $x \to 0$.
In fact, by Lemma \ref{lem:gp}, $f \circ h(x) \sim f \circ D_h(x) \sim D_f \circ D_h(x)$ as $x \to 0$.  Since $g_h$ and $g_f$ are $d_1,d_2$-homogeneous respectively and $r_h$ and $r_f$ are asymptotically $d_1,d_2$-homogeneous respectively by \cite[Theorem 4.1]{GMRV}, then
\begin{align} \frac{f\circ h(tx)}{r_{f\circ h}(t)} & \sim \frac{D_f\circ D_h(tx)}{r_{f\circ h}(t)}\nonumber\\
& \sim  \frac{r_f (| r_h(|tx|)g_h(tx/|tx|)|)g_f\left(\frac{r_h(|tx|)g_h(tx/|tx|)}{|r_h(|tx|)g_h(tx/|tx|)|} \right ) } {r_{f\circ h}(t)}\nonumber\\
& \sim  \frac{r_f (  r_h(|tx|)g_h(x/|x|))g_f\left(\frac{g_h(x/|x|)}{|g_h(x/|x|)|} \right ) } {r_{f\circ h}(t)}\nonumber\\
& \sim\frac{r_f ( \left| |x|^{d_1}r_h(t)g_h(x)|x|^{-d_1}\right |)g_f\left(\frac{g_h(x)|x|^{-d_1}}{|g_h(x)||x|^{-d_1}} \right ) } {r_{f\circ h}(t)}\nonumber\\
& \sim \frac{r_f ( \left| r_h(t)g_h(x)\right |)g_f\left(\frac{g_h(x)}{|g_h(x)|} \right ) } {r_{f\circ h}(t)}\nonumber\\
& \sim \frac{|g_h(x)|^{d_2}r_f ( | r_h(t) |) g_f (g_h(x))|g_h(x)|^{-d_2}}  {r_{f\circ h}(t)}\nonumber\\
& \sim \left (\frac{r_f ( | r_h(t) |) }  {r_{f\circ h}(t)}\right )g_f (g_h(x)) \label{eq:gendercomp}
\end{align}
as $t \to 0$.

Consequently, for any $g \in T(0,f\circ h)$, there exists a sequence $\alpha_k > 0$, $\alpha_k \to 0$, so that $g$ is given by the limit of a convergent sequence 
\begin{equation}\label{eq:compseq}
\left\{\psi_{k}(x) = \left (\frac{r_f ( | r_h(\alpha_k) |) }  {r_{f\circ h}(\alpha_k)}\right )g_f (g_h(x)) \right \}.
\end{equation}
Since $T(0,f\circ h)$ is non-empty, then there exist sequences $\alpha_k > 0$, $\alpha_k \to 0$ so that $\psi_k$ converges locally uniformly on $\R^n$.  Since $|g(B(0,1))| = |B(0,1)|$ then any convergent sequence of the form (\ref{eq:compseq}) converges locally uniformly to $C (g_f\circ g_h)$.  Hence, by (\ref{eq:gendercomp}),
\[T(0,f\circ h) = \{C (g_f\circ g_h)\}.\]
\end{proof}

Note that if $f\circ h$ is differentiable at $0$, then $T(0,f\circ h)$ is given by left-multiplication by the normalized Jacobian matrix (see \cite[(2.3)]{GMRV}), i.e., 
\[g_{f\circ h}(x) = \frac{(f\circ h)'(0)}{J_{f\circ h}(0)^{1/n}}x.\]

\subsection{Starlike domains}

Before discussing the infinitesimal space of the inverse of a locally injective quasiregular map, it is necessary to determine what $d$-homogeneous quasiconformal mappings do to starlike domains centered at zero and hence achieve estimates for the mean radius function.

\begin{lemma}\label{lem:starlikesimp}
Suppose $g:\R^n \to \R^n$ is a homeomorphism such that $g(0) = 0$ and $g$ is $d$-homogeneous near $0$.  If $U \subset \R^n$ is starlike with respect to $0$, then $g(U)$ is starlike with respect to $0$.   
\end{lemma}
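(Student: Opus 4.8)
\textbf{Proof proposal for Lemma \ref{lem:starlikesimp}.}

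The plan is to reduce the problem to a statement about rays through the origin. A set $U$ is starlike with respect to $0$ precisely when, for every $y \in U$, the whole segment $[0,y] = \{ sy : 0 \le s \le 1 \}$ lies in $U$. So I would pick an arbitrary point $w \in g(U)$, write $w = g(y)$ with $y \in U$, and aim to show that $sw \in g(U)$ for all $s \in [0,1]$. The key observation is that $d$-homogeneity, $g(rx) = r^d g(x)$ for $r > 0$, lets me rewrite a scaled image point as the image of a scaled preimage point: for $s \in (0,1]$, choosing $r = s^{1/d} > 0$ gives $g(ry) = r^d g(y) = s\, g(y) = sw$. Since $U$ is starlike with respect to $0$ and $0 < r = s^{1/d} \le 1$ (note $r \le 1$ because $s \le 1$), the point $ry$ lies on the segment $[0,y] \subset U$, hence $sw = g(ry) \in g(U)$. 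The case $s = 0$ is handled separately: $0 = g(0) \in g(U)$ since $0 \in U$ (every nonempty starlike-with-respect-to-$0$ set contains $0$; and $U$ is nonempty as $g$ is a homeomorphism of $\R^n$).

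One technical point to pin down is the phrase ``$d$-homogeneous near $0$'' in the hypothesis versus the globally homogeneous identity \eqref{eq:hom} used above. If $g$ only satisfies $g(rx) = r^d g(x)$ for $x$ in some neighbourhood of $0$ (or for $r$ small), then the argument still goes through provided the starlike domain $U$ is contained in that neighbourhood, or provided one first shrinks: given $y \in U$, the relevant scaled points $ry$ with $r \le 1$ all lie on $[0,y]$, so if $[0,y]$ is within the region of homogeneity the identity $g(ry) = r^d g(y)$ is valid there. In the application to infinitesimal spaces the relevant $g \in T(0,f)$ is genuinely $d$-homogeneous on all of $\R^n$ by \cite[Theorem 4.1]{GMRV}, so this subtlety is harmless; I would simply state the lemma for $g$ homogeneous in a neighbourhood of $0$ and take $U$ small enough, or invoke global homogeneity directly.

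The main (and essentially only) obstacle is bookkeeping: making sure the direction of the exponent is right, i.e.\ that $s \le 1$ forces $r = s^{1/d} \le 1$ so the scaled preimage really does stay inside $U$, and that the map $s \mapsto r = s^{1/d}$ is a bijection of $[0,1]$ onto $[0,1]$ (it is, since $d > 0$). There is no hard analysis here — no need for the mean radius function, quasiconformality, or the equivalence relation $\sim$; only the multiplicative structure $g(rx) = r^d g(x)$ and the definition of starlikeness. Continuity of $g$ is used only trivially, to know $g(0) = 0$ is attained and $g(U)$ is a genuine set containing the images of the segments.
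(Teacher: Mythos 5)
Your proof is correct, and it is a direct argument whereas the paper proves the lemma by contradiction. The paper assumes $g(U)$ fails to be starlike, produces two boundary points $g(x), g(y)$ of $g(U)$ on a common ray from the origin with a gap in between, uses $d$-homogeneity plus injectivity of $g$ to conclude $y = tx$ for some $t > 1$, and then shows the intermediate scaled point $sx$ (with $1 < s < t$) lies on $[0,y]$ but outside $U$, contradicting starlikeness of $U$. Your version avoids this machinery entirely: for $w = g(y)$ and $s \in (0,1]$ you simply observe $sw = g(s^{1/d}y)$ with $s^{1/d}y \in [0,y] \subset U$, so $sw \in g(U)$. Both proofs hinge on the identical algebraic fact that $d$-homogeneity transports scaling along a ray in the image to scaling along a ray in the preimage (via $r \mapsto r^{1/d}$, a monotone bijection of $[0,1]$ for $d > 0$); but your direct formulation sidesteps the need to locate boundary points of $g(U)$ on a ray and does not invoke injectivity of $g$ beyond continuity, so it is leaner and arguably more robust. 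Your side remark about the ``near $0$'' wording versus global homogeneity is a fair observation; as you note, the $g \in T(0,f)$ used in the paper's applications is globally $d$-homogeneous by \cite[Theorem 4.1]{GMRV}, so the discrepancy in the lemma's phrasing is harmless in practice, and the paper's own proof likewise tacitly uses global homogeneity.
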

\begin{proof}
Assume $g(U)$ is not starlike with respect to zero.  Then there exists a ray $\gamma$ connecting zero and infinity and $x,y \in U$, such that $|g(x)| < |g(y)|$ and
\[g(x),g(y) \in (\gamma \cap \partial g(U)),\]
while the line segment $[g(x),g(y)]$ contains points in $\R^n \backslash \overline{g(U)}$.  

Since $g(x),g(y)\in \gamma$, then there exists $t > 1$ so that 
\[g(y) = t^d g(x),\]
where $d> 0$ is the homogeneity factor of $g$.  Further, since $g$ is $d$-homogeneous and globally injective, then $g(y) = g(tx)$ with $y = tx$.  However, since 
\[[g(x),g(y)] \cap \left (\R^n\backslash \overline{g(U)}\right ) \not= \emptyset,\]
then there exists $s \in \R^+$ with $1 < s < t$ so that
\[s^d g(x) \in [g(x),g(y)] \text{ and } s^dg(x) = g(sx)\notin g(U).\]
Hence, $sx \notin U$, which contradicts the assumption that $U$ is starlike.  
\end{proof}

We can then conclude that $d$-homogeneous quasiconformal maps which fix zero map rays through zero onto rays through zero.  Hence, if we're given $t>0$ and a $d$-homogeneous quasiconformal map $p:B(0,t) \to p(B(0,t))$, then we can find the mean radius of $B(0,t)$ under $p$ by integrating over the distance from $0$ to $\partial p(B(0,t))$, and normalize by dividing through by the $(n-1)$-dimensional volume of $S^{n-1}$, i.e.,
\[r_p(t) =  \frac{\int_{S^{n-1}} |p(tw)| dw}{\int_{S^{n-1}} dw}.\]
This gives the average distance from $0$ to $\partial p(B(0,t))$ and hence the mean radius of the image of the ball $B(0,t)$ under $p$. 

Our next result shows that simple infinitesimal spaces are preserved under the equivalence relation $\sim$.

\begin{lemma}\label{lem:simlim}
Let $U$ be a domain which contains zero and let $p, h:U \to \R^n$ be quasiconformal maps which fix zero so that $p(x) \sim h(x)$ as $x \to 0$, and let $r_p$ and $r_h$ be the mean radius functions for $p$ and $h$ respectively.  Then for $x \in U$, $r_p(t) \sim r_h(t)$ and 
\[\frac{p(tx)}{r_p(t)} \sim \frac{h(tx)}{r_h(t)}\]
as $t \to 0$.  Furthermore, $T(0,p) = T(0,h)$.
\end{lemma}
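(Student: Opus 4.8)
The plan is to prove the three assertions in turn. For $t>0$ small write $P_t(x) = p(tx)/r_p(t)$ and $H_t(x) = h(tx)/r_h(t)$ for the normalized maps whose subsequential limits define $T(0,p)$ and $T(0,h)$, and also set $\widetilde{H}_t(x) = h(tx)/r_p(t)$, i.e.\ $h$ rescaled by the mean radius of $p$. By construction $P_t$, $H_t$, $\widetilde{H}_t$ are uniformly quasiconformal (with a dilatation bound depending only on $p$ and $h$), they fix $0$, and $|P_t(B(0,1))| = |H_t(B(0,1))| = \Omega_n$. The hypothesis $p\sim h$, unpacked through \eqref{eq:newsim1}, says that for every $\epsilon>0$ there is $\delta>0$ with $|p(y)-h(y)|\le \epsilon|p(y)|$ whenever $|y|<\delta$; dividing by $r_p(t)$ and putting $y=tx$ gives
\[ |P_t(x) - \widetilde{H}_t(x)| \le \epsilon\,|P_t(x)| \]
for every $x$ in a fixed compact set, once $t$ is small enough.

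To prove $r_p(t)\sim r_h(t)$ I argue along subsequences. Given $t_k\to 0$, the normal family machinery of \cite{GMRV} extracts a subsequence (not relabeled) along which $P_{t_k}$ converges locally uniformly to a non-degenerate quasiconformal map $g_p\in T(0,p)$; by the estimate above, $\widetilde{H}_{t_k}\to g_p$ locally uniformly as well. Locally uniform convergence of uniformly quasiconformal maps forces the volumes of the image of a fixed ball to converge — the quasiconformal image of $\partial B(0,1)$ is Lebesgue-null, so $\chi_{\widetilde H_{t_k}(B(0,1))}\to\chi_{g_p(B(0,1))}$ almost everywhere (by a degree argument) and dominated convergence applies. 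Hence $|g_p(B(0,1))| = \lim_k |P_{t_k}(B(0,1))| = \Omega_n$ and simultaneously
\[ \Omega_n\left(\frac{r_h(t_k)}{r_p(t_k)}\right)^n = |\widetilde{H}_{t_k}(B(0,1))| \longrightarrow |g_p(B(0,1))| = \Omega_n, \]
so $r_h(t_k)/r_p(t_k)\to 1$. Since every sequence tending to $0$ has such a subsequence, $r_h(t)/r_p(t)\to 1$, i.e.\ $r_p(t)\sim r_h(t)$.

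Granting $r_p\sim r_h$, the second assertion is immediate from the identity
\[ P_t(x) - H_t(x) = \frac{p(tx)-h(tx)}{r_p(t)} + H_t(x)\cdot\frac{r_h(t)-r_p(t)}{r_p(t)}: \]
the first summand is $o(|P_t(x)|)$ by the opening estimate and the second is $o(|H_t(x)|)$ by the first assertion, so $|P_t(x)-H_t(x)| = o(|P_t(x)| + |H_t(x)|)$, which is exactly $p(tx)/r_p(t)\sim h(tx)/r_h(t)$ as $t\to 0$. For the equality $T(0,p) = T(0,h)$, let $g\in T(0,p)$ and pick $t_k\to 0$ with $P_{t_k}\to g$ locally uniformly. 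The same identity, together with the bounds $\sup_k\sup_{\overline{B(0,R)}}|P_{t_k}|<\infty$ (the sequence converges) and $\sup_k\sup_{\overline{B(0,R)}}|H_{t_k}|<\infty$ (a normal family by \cite{GMRV}), shows $P_{t_k}-H_{t_k}\to 0$ locally uniformly, hence $H_{t_k}\to g$ and $g\in T(0,h)$. Since $\sim$ is symmetric and the first assertion equally yields $r_h\sim r_p$, the reverse inclusion follows by interchanging $p$ and $h$.

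I expect the only genuinely non-formal point to be the claim, used in the second paragraph, that locally uniform convergence $\widetilde{H}_{t_k}\to g_p$ of uniformly quasiconformal maps implies $|\widetilde{H}_{t_k}(B(0,1))|\to|g_p(B(0,1))|$; this is a standard feature of quasiconformal convergence (the image of the unit sphere under a quasiconformal map is a null set, plus a degree-theoretic or equicontinuity argument for the images), and I would cite it rather than reprove it. Everything else is routine bookkeeping with the relation $\sim$, and it is worth noting that the argument nowhere uses simpleness of $T(0,p)$ or $T(0,h)$, consistent with the hypotheses of the lemma.
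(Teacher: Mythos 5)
Your proof is correct, and it improves on the paper's argument in two places. For the first assertion, the paper simply asserts that $p\sim h$ implies $|p(B(0,t))|\sim |h(B(0,t))|$ and deduces $r_p\sim r_h$ directly; but a volume comparison does not follow purely formally from the pointwise relation $\sim$, and your subsequence argument — extracting a locally uniform limit of the rescaled maps, observing the auxiliary family $\widetilde H_{t}$ converges to the same limit, and then using the Lebesgue-nullity of quasiconformal images of spheres to pass to the limit in the volume — genuinely closes this gap. Your middle step coincides with the paper's: both decompose $p(tx)/r_p(t)-h(tx)/r_h(t)$ into a term controlled by $p\sim h$ and a term controlled by $r_p\sim r_h$, up to trivial rearrangement of the identity. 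For $T(0,p)=T(0,h)$, the paper argues by contradiction, positing that limits $g_p\neq g_h$ along a common scale sequence differ by at least $\epsilon$ somewhere and bounding via a uniform constant $M$; you instead argue directly that $P_{t_k}-H_{t_k}\to 0$ locally uniformly whenever one family converges, so any subsequential limit of one is a subsequential limit of the other, and appeal to symmetry of $\sim$ for the reverse inclusion. Both work, but your version is shorter and avoids the extraneous bookkeeping. Your closing remark that simpleness of the infinitesimal spaces is nowhere used is accurate and worth retaining.
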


\begin{proof}

First, observe that since $p \sim h$, then $|p(B(0,t))| \sim |h(B(0,t))|$ as $t\to 0$.  Hence, $r_p(t) \sim r_h(t)$.  We want to show that
\[\left | \frac{p(tx)}{r_p(t)} - \frac{h(tx)}{r_h(t)}\right | = o \left( \frac{|p(tx)|}{r_p(t)} + \frac{|h(tx)|}{r_h(t)} \right )\]
as $t \to 0$.  
Since $p \sim h$ and $r_p \sim r_h$, then by (\ref{eq:newsim1}) and (\ref{eq:newsim2}),
\begin{align*}\left | \frac{p(tx)}{r_p(t)} - \frac{h(tx)}{r_h(t)}\right | & = \left |\frac{r_h(t)p(tx) + r_p(t)p(tx) - r_p(t)p(tx) - r_p(t)h(tx)}{r_p(t)r_h(t)}\right | \\
& = \left |\frac{p(tx)(r_h(t)-r_p(t)) + r_p(t)(p(tx)- h(tx))}{r_p(t)r_h(t)}\right | \\
& \leq  \frac{|p(tx)(r_h(t)-r_p(t))|}{r_p(t)r_h(t)} + \frac{|r_p(t)(p(tx)- h(tx))|}{r_p(t)r_h(t)} \\
& = \frac{|p(tx)|o(r_h(t))}{r_p(t)r_h(t)} + \frac{r_p(t)o(|h(tx)|)}{r_p(t)r_h(t)} \\
& = \frac{|p(tx)|o(1)}{r_p(t)} + \frac{o(|h(tx)|)}{r_h(t)}\\
& = o \left ( \frac{|p(tx)|}{r_p(t)} \right ) + o\left ( \frac{|h(tx)|}{r_h(t)} \right )
\end{align*}
as $t \to 0$.  

Finally, suppose $g_p \in T(0,p)$ and $g_h \in T(0,h)$ where for a sequence $\alpha_k > 0$, $\alpha_k \to 0$ as $k \to \infty$ there exists a subsequence $\alpha_{j_k}$ so that
\[\lim_{k\to\infty}\frac{p(\alpha_{j_k} x)}{r_p(\alpha_{j_k})} = g_p, \quad \lim_{k\to\infty}\frac{h(\alpha_{j_k} x)}{r_h(\alpha_{j_k})} = g_h\]
locally uniformly on $U$.  Suppose for a contradiction that $g_p \not\equiv g_h$. Then there exists $\epsilon > 0$, $x_0 \in \R^n$ and $K\in \N$  where for $k \geq K$ with $\alpha_{j_k} x_0 \in U$ so that
\[ \left |\frac{p(\alpha_{j_k} x_0)}{r_p(\alpha_{j_k})} -  \frac{h(\alpha_{j_k} x_0)}{r_h(\alpha_{j_k})} \right | > \epsilon.\]
However, 
\[\left | \frac{p(tx_0)}{r_p(t)} - \frac{h(tx_0)}{r_h(t)}\right | = o \left ( \frac{|p(tx_0)|}{r_p(t)} \right ) + o\left ( \frac{|h(tx_0)|}{r_h(t)} \right )\]
as $t\to 0$.  Since 
\[\left \{g_{p_k}(x) = \frac{p(\alpha_{j_k}x)}{r_p(\alpha_{j_k})} \right \}, \quad \left \{g_{h_k}(x) = \frac{h(\alpha_{j_k}x)}{r_h(\alpha_{j_k})} \right \}\]
are normal families which both converge to a quasiregular map of polynomial type (see \cite[Theorem 2.7]{GMRV}), then there exists a neighbourhood $N$ of zero so that each $g_{p_k}$ and $g_{h_k}$ is bounded.  Hence, for
\[M = \max \left\{\sup_{x \in N}\frac{|p(x)|}{r_p(|x|)},\sup_{x \in N}\frac{|h(x)|}{r_h(|x|)}\right\},\]
$M < \infty$ and for any $\epsilon' > 0$, there exists $K' \in \N$ so that for all $k > K'$, 
\begin{align*} \left |\frac{p(\alpha_{j_k} x_0)}{r_p(\alpha_{j_k})} -  \frac{h(\alpha_{j_k} x_0)}{r_h(\alpha_{j_k})} \right |& < \epsilon'\left ( \frac{|p(\alpha_{j_k}x_0)|}{r_p(\alpha_{j_k})} + \frac{|h(\alpha_{j_k}x_0)|}{r_h(\alpha_{j_k})} \right )\\
&< M\epsilon'.
\end{align*}

Since we can choose $K'$ large enough so that $\epsilon' < \epsilon/M$, we reach a contradiction. Hence $g_p$ and $g_h$ agree. Since this holds for any sequence $\alpha_k \to 0$, we conclude that the infinitesimal spaces coincide.
\end{proof}

\begin{lemma}\label{lem:asymptinv}
If $f$ is quasiconformal in a neighbourhood of $0$, $f$ fixes $0$, and $T(0,f)$ is simple, then recalling $D$ from (\ref{eq:fasymp}), $f^{-1} \sim D^{-1}$.
\end{lemma}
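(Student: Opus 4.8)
The goal is to show that if $f$ is quasiconformal near $0$, fixes $0$, and $T(0,f)$ is simple, then $f^{-1} \sim D^{-1}$ as $x \to 0$, where $D$ is the asymptotic representation from \eqref{eq:fasymp}. The plan is to unwind the equivalence relation and exploit the radial homogeneity of the mean radius function together with the quasisymmetry of $f$ and $D$. Since $f \sim D$ by \eqref{eq:fasymp}, and both $f$ and $D$ are quasiconformal near $0$ (recall $D(x) = r_f(|x|)g(x/|x|)$ with $g$ quasiconformal and $r_f$ increasing, so $D$ is a homeomorphism onto its image), we should be able to transfer the asymptotic equivalence through the inverses.

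First I would set $y = f(x)$, so that $x = f^{-1}(y)$, and observe that as $y \to 0$ we have $x \to 0$ since $f^{-1}$ is continuous and fixes $0$. Writing $f^{-1}(y) - D^{-1}(y) = x - D^{-1}(f(x))$, I want to show this is $o(|x| + |D^{-1}(f(x))|)$. The key identity is that $D^{-1}(D(x)) = x$, so $x - D^{-1}(f(x)) = D^{-1}(D(x)) - D^{-1}(f(x))$. Now $f(x) \sim D(x)$ as $x \to 0$, i.e.\ $|f(x) - D(x)| = o(|D(x)|)$ by \eqref{eq:newsim1}, and I would apply the quasisymmetry of $D^{-1}$ (Theorem \ref{thm:qs}, after noting $D^{-1}$ is quasiconformal, or more precisely that $D$ restricted to a small ball is quasiconformal onto its image) to control $|D^{-1}(D(x)) - D^{-1}(f(x))|$ in terms of how close $f(x)$ is to $D(x)$ relative to $|D(x)|$. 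Quantitatively, quasisymmetry gives
\[
\frac{|D^{-1}(D(x)) - D^{-1}(f(x))|}{|D^{-1}(D(x)) - D^{-1}(0)|} \leq \eta\!\left( \frac{|D(x) - f(x)|}{|D(x) - 0|} \right) = \eta(o(1)) = o(1),
\]
since $\eta$ is a homeomorphism fixing $0$; because $D^{-1}(0) = 0$ and $D^{-1}(D(x)) = x$, this reads $|x - D^{-1}(f(x))| = o(|x|)$, which is exactly \eqref{eq:newsim1} for the pair $f^{-1}$ and $D^{-1}$ (after the substitution $y = f(x)$, noting $|x| = |f^{-1}(y)|$). That establishes one half of $\sim$; the symmetric half, $|f^{-1}(y) - D^{-1}(y)| = o(|D^{-1}(y)|)$, follows the same way, now starting from $y = D(x')$, setting $x' = D^{-1}(y)$, using quasisymmetry of $f^{-1}$, and invoking $f(x') \sim D(x')$ once more together with the fact that $r_f \sim r_D$ (indeed $r_f = r_D$ by \eqref{eq:Dfmeanrad}) to compare magnitudes.

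The main obstacle I anticipate is making rigorous the claim that $D$ (equivalently $D^{-1}$) is quasiconformal, or at least quasisymmetric, on a small enough ball about $0$. The map $D(x) = r_f(|x|) g(x/|x|)$ is a composition of the radial stretch $x \mapsto r_f(|x|) x/|x|$ with the ($d$-homogeneous, hence ray-preserving by Lemma \ref{lem:starlikesimp} and the discussion after it) quasiconformal map $g$; the radial factor need not be quasiconformal unless $r_f(t)$ is roughly comparable to a power of $t$, but the asymptotic $d$-homogeneity \eqref{eq:meanhom} of $r_f$, combined with the corresponding homogeneity $r_f^{-1}(st) \sim s^{1/d} r_f^{-1}(t)$ from Lemma \ref{lem:meanhom}, should give enough regularity to run the quasisymmetry-style estimate directly on $D$ and $D^{-1}$ without literally invoking Theorem \ref{thm:qs}. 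An alternative and perhaps cleaner route that sidesteps this issue entirely: use Lemma \ref{lem:simlim} and Lemma \ref{lem:asymptinf}, which tell us $T(0,D) = T(0,f)$ is simple with the same generalized derivative $g$ and the same mean radius function $r_D = r_f$; then apply whatever inverse estimate is available (this lemma is the preparatory step for Theorem \ref{thm:invgender}, so the argument must be self-contained) by writing both $f^{-1}$ and $D^{-1}$ in terms of $g^{-1}$ and $r_f^{-1}$ and comparing. I would present the substitution-plus-quasisymmetry argument as the primary proof and remark that the homogeneity of $r_f$ and $r_f^{-1}$ supplies the quantitative control needed at the one point where genuine quasiconformality of $D$ is used.
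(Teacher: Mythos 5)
Your proposal contains essentially the right ideas---both you and the paper push the equivalence $f\sim D$ through the inverses via a substitution and a quasisymmetry estimate---but you lead with the wrong half of the argument. Your ``primary'' estimate applies quasisymmetry to $D^{-1}$, which is not justified: as you yourself flag, $D(x) = r_f(|x|)\,g(x/|x|)$ need not be quasiconformal, since $g$ is globally quasiconformal but the radial factor $r_f$ is only known to be increasing, continuous, and \emph{asymptotically} $d$-homogeneous, which does not control the local dilatation of the associated radial stretch. The homogeneity workaround you sketch can probably be pushed through, but it amounts to reproving a piece of quasisymmetry from scratch for the radial map and is considerably messier than needed.

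The clean fix is already contained in your ``symmetric half'': that estimate applies quasisymmetry to $f^{-1}$, which is genuinely quasiconformal by hypothesis, and lands you at $|f^{-1}(y) - D^{-1}(y)| = o(|D^{-1}(y)|)$. Since the one-sided conditions \eqref{eq:newsim1} and \eqref{eq:newsim2} are each individually equivalent to $\sim$, this single estimate already gives $f^{-1}\sim D^{-1}$; the ``primary'' half is logically redundant and should simply be dropped. This reordering is exactly what the paper's proof does, packaged via Lemma~\ref{lem:gp}: it post-composes $D\sim f$ by $f^{-1}$ to obtain $f^{-1}\circ D\sim Id$ (the only quasisymmetry used here is that of $f^{-1}$), and then pre-composes by $D^{-1}$. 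The crucial observation---which dissolves your anticipated obstacle rather than confronting it---is that the pre-composition part of Lemma~\ref{lem:gp} needs only that the inner map be continuous and fix $0$, never that it be quasiconformal; $D^{-1}$ is a homeomorphism fixing $0$, so this is automatic. Your suggested alternative via Lemmas~\ref{lem:simlim} and~\ref{lem:asymptinf} should be abandoned: those lemmas infer equality of infinitesimal spaces \emph{from} an equivalence $\sim$, not the reverse, so using them to deduce $f^{-1}\sim D^{-1}$ would be circular.
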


\begin{proof}
First, observe that if $f \sim D$ and $f$ is injective near $0$, then $D$ is injective in a neighbourhood of $0$.  Let $U,V$ be neighbourhoods of $0$ so that $f:U \to V$ is a bijective, quasiconformal map.  Then Lemma \ref{lem:gp} implies that
\[f^{-1} \circ D \sim f^{-1} \circ f = Id.\]
Hence, $f^{-1} \circ D \sim Id$.  Pre-composing by $D^{-1}$, Lemma \ref{lem:gp} gives us 
\[(f^{-1} \circ D) \circ D^{-1} \sim D^{-1}.\]
Hence, $f^{-1} \sim  D^{-1}$.  
\end{proof}

\subsection{Inverse generalized derivative formula}

We are now ready to prove Theorem \ref{thm:invgender}.

\begin{proof}[Proof of Theorem \ref{thm:invgender}]
Observe, by Lemma \ref{lem:asymptinv}, if $f \sim D$ with $D$ as in (\ref{eq:fasymp}), then $f^{-1} \sim D^{-1}$.  Hence, by Lemma \ref{lem:simlim}, $r_{f^{-1}}(t) \sim r_{D^{-1}}(t)$ and
\begin{equation}\label{eq:genderformula}
\frac{f^{-1}(tx)}{r_{f^{-1}}(t)} \sim \frac{D^{-1}(tx)}{r_{D^{-1}}(t)}
\end{equation}
as $t \to 0$ for $x \in U$.  Next, we wish to approximate $r_{D^{-1}}(t)$ and find an asymptotic formula for $D^{-1}$ for $t > 0$, $t \to 0$.  

Note, by (\ref{eq:Dfmeanrad}), $r_D(t) = r_f(t)$.    Hence, $D(tx) = r_D(t|x|)g(x/|x|)$.  Indeed, for $u \in S^{n-1}$, $D(tu) = r_D(t)g(u)$.  Further, since $r_f$ is asymptotically $d$-homogeneous, then by Lemma \ref{lem:meanhom}, $r_{D^{-1}}$ is asymptotically $1/d$-homogeneous.  Hence,

\begin{align*}(r_D)^{-1}(|D(tu)|)g^{-1}(D(tu)/|D(tu)|) &= (r_D)^{-1}(|r_D(t)||g(u)|)g^{-1}\left (\frac{g(u)}{|g(u)|}\right )\\
&\sim |g(u)|^{1/d}(r_D)^{-1}(r_D(t))\frac{g^{-1}(g(u))}{|g(u)|^{1/d}}\\
&= (r_D)^{-1}(r_D(t))u=tu.
\end{align*}

Setting $x = D(tu)$, we obtain 
\begin{equation}\label{eq:invasymp}
D^{-1}(x) \sim (r_D)^{-1}(|x|)g^{-1}(x/|x|)
\end{equation}

To find $r_{D^{-1}}(t)$, we need to integrate $D^{-1}(w)$ over $S^{n-1}$.  Now, observe that since $f$ is locally injective near $0$, then by \cite[Corollary 2.8]{GMRV}, $g$ is globally quasiconformal, and so $g^{-1}$ is globally quasiconformal.  Further, since $r_D:[0,\infty) \to [0,\infty)$ is increasing and continuous, then $(r_D)^{-1}$ is increasing and continuous.  Finally, since $r_D$ is $d$-homogeneous as $t \to 0$, then for $u \in S^{n-1}$, $(r_D)^{-1}(t)g^{-1}(u)$ is $1/d$-homogeneous.  Since $g^{-1}$ is $1/d$-homogeneous, then by Lemma \ref{lem:starlikesimp}, the image of $B(0,t)$ under $g^{-1}$ is starlike with respect to $0$.  Since $(r_D)^{-1}$ is increasing, continuous, real-valued and positive, then $h(B(0,t))$ is starlike with respect to zero.  Therefore
\begin{align}
r_{D^{-1}}(t) & = \frac{\int_{S^{n-1}} |D^{-1}(tw)| dw}{\int_{S^{n-1}} dw}\nonumber\\
& = C\int_{S^{n-1}} |D^{-1}(tw)| dw \nonumber\\
& \sim C \int_{S^{n-1}} (r_D)^{-1}(t) |g^{-1}(w)| dw\label{eq:meanradinv}
\end{align}
as $t \to 0$, where $C$ is a constant depending only on $n$.

It then follows from (\ref{eq:invasymp}) and (\ref{eq:meanradinv}) that for $x \in f(U)$ and $x \not= 0$,
\begin{align*} \frac{D^{-1}(tx)}{r_{D^{-1}}(t)} & \sim \frac{(r_D)^{-1}(t|x|) g^{-1}(x/|x|)}{C(r_D)^{-1}(t)\int_{S^{n-1}}|g^{-1}(w)|dw}\\
& \sim \frac{|x|^{1/d}(r_D)^{-1}(t) g^{-1}(x)/|x|^{1/d}}{C'(r_D)^{-1}(t)}\\
& = C''g^{-1}(x)
\end{align*}
as $t \to 0$, where $C''$ depends on $n$ and $g$.

The result that $T(0,f^{-1}) = \{C''g^{-1}\}$ then follows from (\ref{eq:genderformula}).  
\end{proof}

\subsection{Proof of Theorem \ref{thm:simpdiff}}

As a payoff to the work on generalized derivatives in this section, we can now prove Theorem \ref{thm:simpdiff} to show that when a strongly automorphic quasiregular map is simple at $0$, then the solution to the Schr\"oder equation with $A(x) = \lambda x$ is particularly nice.

\begin{proof}[Proof of Theorem \ref{thm:simpdiff}]
Observe, since $L$ is locally injective near $0$, then choosing an appropriate branch of $L^{-1}$, we can write $f = L \circ A \circ L^{-1}$ in a neighbourhood of $x_0$.  By (\ref{eq:fasymp}), since $T(0,L)$ is simple, we may write 
\[L(x) \sim r_L(|x|)g_L(x/|x|) + x_0,\]
where $g_L \in T(0,L)$, as $x \to 0$.  By Theorem \ref{thm:invgender}, $T(x_0,L^{-1})$ is simple.  Hence, we may write
\[L^{-1}(x) \sim r_{L^{-1}}(|x-x_0|)g_{L^{-1}}((x-x_0)/|x-x_0|).\]
Since $L(L^{-1}(x)) = x$ in a neighbourhood of $x_0$, then by Lemma \ref{lem:gp},
\begin{equation}\label{eq:dif}
x \sim r_L(|L^{-1}(x)|)g_L(L^{-1}(x)/|L^{-1}(x)|)+x_0
\end{equation}
as $|x-x_0| \to 0$.
Now, $f(x) = L(\lambda(L^{-1}(x)))$, and so by Lemma \ref{lem:gp}, we can write  
\[f(x) \sim r_L(|\lambda L^{-1}(x)|)g_L(\lambda L^{-1}(x)/|\lambda L^{-1}(x)|) + x_0 \]
as $|x-x_0| \to 0$.  
Hence, by Lemma \ref{lem:meanhom},
\[f(x) \sim \lambda^d r_L(|L^{-1}(x)|)g_L(L^{-1}(x)/|L^{-1}(x)|) +x_0\]
as $|x-x_0| \to 0$.
Then, by (\ref{eq:dif}), 
\[f(x) \sim x_0 + \lambda^d (x-x_0)\]
as $|x-x_0| \to 0$.  Hence, $f(x) = x_0 + \lambda^d (x-x_0) + o(x-x_0)$ in a neighbourhood of $x_0$.  Thus, $f$ is differentiable at $x_0$, and $f'(x_0) = \lambda^d Id$.   
\end{proof}

Observe that if $L$ is a linearizer and the uqr map $f$ is differentiable at $x_0$, then $\mathcal{D}f(x_0)$ is simple.  Hence, $L$ must semi-conjugate $f$ to $A$ and so the homogeneity factor of $L$ at zero is $d = 1$.  The following corollary is immediate.

\begin{corollary}
Suppose $f:\R^n \to \R^n$ is uqr and differentiable at a repelling fixed point $x_0$ with derivative $f'(x_0) = \lambda Id$.  Then for any $L \in \mathcal{L}(x_0,f)$ so that $T(0,L)$ is simple, the homogeneity factor of $L$ at zero is $d = 1$.  
\end{corollary}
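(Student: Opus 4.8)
The plan is to read this corollary off directly from Theorem \ref{thm:simpdiff}. First I would use the fact recalled in Section \ref{sec:t}: since $f$ is differentiable at $x_0$, the set $\mathcal{D}f(x_0)$ is simple and consists solely of the linear map $x \mapsto f'(x_0)x = \lambda x$. Because $x_0$ is a repelling fixed point, this map is loxodromic repelling, which forces $\lambda > 1$. In particular, there is only one available generalized derivative, namely $A(x) = \lambda x$.

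Next I would unpack the definition of a linearizer. Any $L \in \mathcal{L}(x_0,f)$ is locally injective near $0$, satisfies $L(0) = x_0$, and satisfies $f \circ L = L \circ \varphi$ for some $\varphi \in \mathcal{D}f(x_0)$. By the previous paragraph, $\varphi$ must equal $A$ with $A(x) = \lambda x$ and $\lambda > 1$. Thus $L$ is exactly the type of map appearing in the hypotheses of Theorem \ref{thm:simpdiff}; since we additionally assume $T(0,L)$ is simple, all hypotheses of that theorem are satisfied. Applying it gives that $f$ is differentiable at $x_0$ with $f'(x_0) = \lambda^d Id$, where $d$ is the homogeneity factor of $L$ at $0$. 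Comparing with the given value $f'(x_0) = \lambda Id$ yields $\lambda^d = \lambda$, and since $\lambda > 1$ this forces $d = 1$.

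There is essentially no obstacle: the corollary is an immediate consequence of Theorem \ref{thm:simpdiff} combined with the elementary fact that differentiability pins down $\mathcal{D}f(x_0)$. The only point needing any care is confirming that the generalized derivative appearing in the linearizer equation for $L$ is the dilation $x \mapsto \lambda x$ rather than some other a priori possible element of $\mathcal{D}f(x_0)$, which is handled by the simplicity of $\mathcal{D}f(x_0)$.
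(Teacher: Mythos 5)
Your argument is correct and follows essentially the same route the paper sketches just before the corollary: differentiability of $f$ at $x_0$ forces $\mathcal{D}f(x_0)$ to be the single linear map $x\mapsto \lambda x$, so $L$ must semi-conjugate $f$ to $A(x)=\lambda x$, and Theorem \ref{thm:simpdiff} then gives $f'(x_0)=\lambda^d Id$, whence $d=1$.
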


One can ask how the situation differs if $A = \lambda \mathcal{O}$ for $\lambda >1$ and $\mathcal{O}$ an orthogonal matrix. 
In our applications, there is always an integer $k$ so that $\mathcal{O}^k$ is the identity. Consequently, we can just replace $f$ by an iterate and apply Theorem \ref{thm:simpdiff}.

However, if we do not make this assumption on $\mathcal{O}$, it turns out by Theorems \ref{thm:chainrule} and \ref{thm:invgender} that $T(x_0,f)$ is simple, and the generalized derivative of $f$ at $x_0$ is given by $g = C(g_L \circ \lambda \mathcal{O} \circ g_{L^{-1}})$ where $C > 0$ is chosen so that $g$ preserves the volume of $\B^n$.  However,  since there exists $C' > 0$ so that $g_{L^{-1}} = C'(g_L)^{-1}$ and $g_L$ is $d$-homogeneous, then we can write
\[g = (CC'^d \lambda^d) [g_L \circ \mathcal{O} \circ (g_L)^{-1}].\]
Thus, $g$ is given by a composition of a quasiconformal conjugate of an orthogonal map with a scaling. 

\section{Maps on lattices}

In this section, we discuss how to pass from a quasiregular map $h_1$ which is strongly automorphic with respect to a tame quasiconformal group $G_1$ and a Schr\"oder equation of the form $f_1\circ h_1 = h_1\circ A_1$, where $A_1$ is a loxodromic repelling uniformly quasiconformal map, to a quasiregular map $h$ which is strongly automorphic with respect to a discrete group of isometries and a Schr\"oder equation $f \circ h = h \circ M$, where $M$ is linear and uniformly quasiconformal.

The first step is to observe that the tameness implies there is a quasiconformal map conjugating $G_1$ to $G$, where $G$ is a discrete group of isometries. We may as well then assume that $h$ is strongly automorphic with respect to such a group. We now prove the various parts of Proposition \ref{prop:linear}.

\begin{proof}[Proof of Proposition \ref{prop:linear} (i)]
Let $\{g_1, \cdots, g_k\}$, for $k\in \{n-1,n\}$, be the generating set for the translation subgroup $\mathcal{T}$ of $G$, where $g_i(x) = x + w_i$ for $1 \leq i \leq k$.  
Since $AGA^{-1} \subset G$, then for any $g \in G$, there exists $h\in G$ so that 
\[A \circ g = h \circ A.\]
Hence, $A\circ g_i(0) = h_i\circ A(0) = h_i(0)$ for some $h_i \in G$.  Since $h_i$ is an isometry in $G$, then there exists a translation $T_i$ and rotation fixing zero $R_i$ so that $h_i = T_i \circ R_i$. 
Then there is some $u_i \in \Lambda$ so that $T_i(x) = x+u_i$ and hence
\[ A(w_i) = T_i(R_i(A(0))) = u_i.\]
We want to show that in fact $R_i$ must be the identity.
Observe that since $R_i \in G$, and $R_i(\Lambda) = \Lambda$, then there exists $m_i \geq 1$ so that $R_i^{m_i} = Id$. 
Via the equation $A\circ g_i = h_i \circ A$, we obtain $A\circ g_i^m = h_i^m \circ A$ for $m\in \N$. It is then straightforward to compute that
\[ A(x+mw_i) = R_i^m (A(x)) + \sum_{j=0}^{m-1} R_i^j(u_i).\]
If $R_i$ is not the identity, then since $R_i^{m_i}$ is the identity, it follows that $\sum_{j=0}^{m-1} R_i^j(u_i)$ is a bounded sequence (in $m$) in $\R^n$. On the other hand, since $|A(x+mw_i)| \to \infty$ as $m\to \infty$ we obtain a contradiction. Hence $h_i(x) = x+u_i$ for $i=1,\ldots, k$.

This means that $A|_{\Lambda}$ is a linear map $M$ sending $w_i$ onto $u_i$ for $i=1,\ldots, k$. We have to show that $M$ is uniformly quasiconformal, that is, $M=\lambda \mathcal{O}$. To that end, suppose that $M$ is linear but not uniformly quasiconformal. Now, since $M$ maps $\Lambda$ into itself, and $\mathcal{T}$ has rank either $n-1$ or $n$, then $\Lambda$ sits in $\R^j$, where $j\in \{n-1,n\}$. Fix $j$ according to the rank of $\mathcal{T}$, so that in particular, $M$ maps $\R^j$ into itself. We will work in $\R^j$.

For $r>0$ let $\mathcal{A}(r)$ denote the ring domain
\[ \mathcal{A}(r) = \{ x\in \R^j :  r<|x| <2r \}.\]
Also for $r>0$, consider the set
\[ \Lambda(r) = \{x\in \mathcal{A}(1) : rx\in \Lambda \} \subset \mathcal{A}(1).\]
Given $\epsilon >0$, we can find $R$ large so that for $r\geq R$, any point $x\in \mathcal{A}(1)$ is within distance $\epsilon$ of a point $y \in \Lambda(r)$.

If $k\in \N$, since $M^k$ is linear, there are directions $\sigma_k , \nu_k \in S^{j-1}$ so that the maximum and minimum moduli of $M^k$ in $\R^j$ satisfy
\[ M(r,M^k) = |f(r\sigma_k)|, \quad m(r,M^k) = |f(r\nu_k)|,\]
for any $r>0$. Then given $\epsilon >0$, choose $r$ large enough that there are points $x_k,y_k \in \Lambda(r)$ with
\[ |M^k(x_k)| \geq (1-\epsilon)|x_k| |M^k(\sigma_k)|, \quad |M^k(y_k)| \leq (1+\epsilon) |y_k| |M^k(\nu_k)|.\]
Hence $rx_k$ and $ry_k$ are elements of $\Lambda$ with
\[ \frac{ |rx_k|}{|ry_k|} \leq 2\]
and
\[ \frac{ | M^k(rx_k)|}{|M^k(ry_k) |} \geq \left ( \frac{1-\epsilon}{2(1+\epsilon)} \right) \frac{ |M^k (r\sigma_k) |}{|M^k(r\nu_k) |}.\]
Since $M^k$ is not uniformly quasiconformal, the linear distortion of $M^k$ is unbounded as $k\to \infty$ and hence by the definition of $\sigma_k$ and $\nu_k$, 
\[ \frac{ | M^k(rx_k)|}{|M^k(ry_k) |}  \to \infty \]
as $k\to \infty$. On the other hand, since $A^k$ is $K$-quasiconformal for all $k\in \N$, by Theorem \ref{thm:qs} we have
\[ \frac{ |A^k(rx_k) - A^k(0) |}{ |A^k( ry_k) - A(0) |} \leq \eta \left ( \frac{ |rx_k|}{|ry_k|} \right ) \leq \eta(2)\]
for all $k$, where $\eta$ depends only on $n$ and $K$. This contradiction implies that in fact $M$ must be uniformly quasiconformal, and hence of the form $\lambda \mathcal{O}$ for some $\lambda >1$ and orthogonal $\mathcal{O}$ in $\R^j$.

If $j=n$ we are done. If $j=n-1$, observe that $M$ extends uniquely to a map of the form $\lambda \mathcal{O}$ in $\R^n$.
\end{proof}

We next prove the second part of Proposition \ref{prop:linear}.

\begin{proof}[Proof of Proposition \ref{prop:linear} (ii)]

From the proof of the first part, we can write
\[ A(x) = M(x) +E(x),\]
where $E$ is the error between $A$ and the linear map $M$, with $E|_{\Lambda} \equiv 0$.
Consider the family of maps $\{\iota_k\}$ defined by $\iota_k = M^{-k}A^k.$  Since both $A$ and $M$ are uqc, then $\{\iota_k\}$ is a family of $K$-quasiconformal maps, for some $K>1$.   
Since $M$ is linear, we can compute that
\[ A^k(x)  = M^k(x) + \sum_{i=0}^{k-1} M^i(E(x))\]
and hence
\begin{equation}
\label{eq:iota} 
\iota_k(x) = x + \sum_{i=1}^k M^{-i}(E(x)),
\end{equation}
for $k\geq 1$. Let $R>0$. Since $E$ is continuous, $E$ is bounded on $\overline{B(0,R)}$, say
\[ |E(x)| \leq C_R, \quad x\in \overline{B(0,R)}.\]
Since $M$ is a loxodromic repelling uniformly quasiconformal linear map, $M = \lambda \mathcal{O}$ for some $\lambda>1$ and orthogonal $\mathcal{O}$. Hence $M^{-1} = \lambda^{-1} \mathcal{O}^{-1}$ satisfies $||M^{-1}|| \leq \delta $ for some $\delta <1$. Here $||.||$ denotes the operator norm. Hence, for $x\in \overline{B(0,R)}$ we have
\begin{align*} 
| \iota_k(x) | &= \left |x+ \sum_{i=1}^k M^{-i}(E(x)) \right | \\
&\leq |x| + \left (  \sum_{i=1}^k ||M^{-i}|| \right ) |E(x)|\\
&\leq R + \left ( \sum_{i=1}^k \delta^i \right )C_R\\
&\leq R + \frac{\delta C_R}{1-\delta}.
\end{align*}
We conclude that $\iota_k$ is a uniformly bounded family of $K$-quasiconformal maps on $B(0,R)$ and hence is a normal family. Since $R$ is arbitrary, any sequence from $\iota_k$ has a subsequence which converges uniformly on any compact subset of $\R^n$. In fact, the sequence $\iota_k$ itself converges uniformly on compact subsets. To see this, by \eqref{eq:iota}, for $|x|\leq R$, we have
\[ | \iota_k(x) - \iota_{k+1}(x)| = |M^{-(k+1)}(E(x)) | \leq \delta^{-(k+1)}C_R.\]
Hence $(\iota_k(x))$ is a Cauchy sequence in $\R^n$ and standard arguments then imply the sequence of functions $\iota_k$ converges uniformly on $\overline{B(0,R)}$ to, say, $\iota$. Hence $\iota_k \to \iota$ uniformly on compact subsets of $\R^n$.
Since $\iota_k \circ A = M\circ \iota_{k+1}$, we conclude that 
\begin{equation}
\label{eq:iotaconj}
\iota \circ A = M \circ \iota.
\end{equation}
Finally, since $A$ and $M$ agree on $\Lambda$, it is clear that $\iota |_{\Lambda}$ is the identity.
\end{proof}

\begin{proof}[Proof of Proposition \ref{prop:linear} (iii)]
The assumptions are that $f$ is the unique solution to $f\circ h = h \circ A$, $A$ satisfies $AGA^{-1}\subset G$ and by part (ii), $M$ is a uqc linear map with $\iota \circ A = M\circ \iota$. Snce $M = \lambda \mathcal{O}$, then for any $g\in G$, it follows that $MgM^{-1}$ is an isometry which maps $\Lambda$ into itself. Hence $MGM^{-1}\subset G$. Moreover, if $g\in G$, then there exists $h\in G$ so that both
\begin{equation}
\label{eq:both}
A\circ g = h\circ A, \quad \text{ and } \quad M\circ g = h\circ M.
\end{equation}
Since $A$ and $M$ agree on $\Lambda$, it follows that for $x\in \Lambda$, and $g\in G$
\[ \iota(g(x)) = g(\iota(x)).\]
Moreover, since $\iota \circ A^k = M^k \circ \iota$ for any $k\in \N$, if $x\in A^{-k}(\Lambda)$ then 
\[  M^k(\iota(x)) = \iota(A^k(x)) = A^k(x) \in \Lambda.\]
In particular, $\iota$ maps $A^{-k}(\Lambda)$ onto $M^{-k}(\Lambda)$ for any $k\in \N$. Consequently, if $x\in O^-_M(\Lambda)$ and $g\in G$, then $y=M^k(x) \in \Lambda$ for some $k\in \N$ and \eqref{eq:both} then implies
\begin{align*}
\iota (g(\iota^{-1}(x))) &= \iota(g(\iota^{-1}(M^{-k}(y)))) \\
&=\iota(g(A^{-k}(\iota^{-1}(y)))) \\
&= \iota(g(A^{-k}(y))) \\
&= \iota(A^{-k}(h(y))) \\
&= M^{-k}(\iota (h(y))) \\
&= M^{-k}(h(y)) \\
&= g(M^{-k}(y)) \\
&=g(x).
\end{align*}
Since $\overline{O^-_M(\Lambda) }$ is either all of $\R^n$ or a codimension $1$ hyperplane, and since an isometry fixing pointwise such a hyperplane is uniquely defined, we conclude that $\iota \circ g = g \circ \iota$ holds everywhere in $\R^n$. 
In particular, we have $\iota G\iota^{-1} = G$.

Writing $\widetilde{h} = h\circ\iota^{-1}$, we see from \cite[Lemma 3.5]{FM2} that $\widetilde{h}$ is strongly automorphic with respect to $\iota^{-1}G\iota = G$. Further, $MGM^{-1}\subset G$ and we see that
\[ f\circ \widetilde{h} = f\circ h\circ \iota^{-1} = h\circ A \circ \iota^{-1} = h\circ \iota^{-1}\circ M  = \widetilde{h} \circ M.\]
Since solutions to a Schr\"oder equation are unique, we obtain our conclusion.
\end{proof}

\begin{proof}[Proof of Proposition \ref{prop:linear} (iv)]
Given a discrete group of isometries $G$ and a uqc linear map $M$ satisfying $MGM^{-1}\subset G$, all we need is a quasiconformal map $\psi$ which satisfies $\psi \circ g = g\circ \psi$ for all $g\in G$ and then take $A:=\psi^{-1}M\psi$. Then $\psi$ plays the role of $\iota$ above.

There are many ways to do this. For example, let $C$ be a fundamental set for $G$ and let $\psi |_{\partial C}$ be the identity. Then for an interior point $x_0$ of $C$ with $B(x_0,3r) \subset C$,  let $\psi$ be any quasiconformal self-map of $\overline{B(x_0,r)}$. We can interpolate on $B(x_0,2r) \setminus \overline{B(x_0,r)}$ via Sullivan's Annulus Theorem so that $\psi$ is the identity elsewhere in $C$. Then extend $\psi$ so that $\psi \circ g = g\circ \psi$ for all $g\in G$.

It follows that $A$ is a non-linear uqc map satisfying $AGA^{-1}\subset G$.
\end{proof}

\section{Simultaneous linearization}

We are now in a position to prove Theorem \ref{thm:repperpts}

\begin{proof}[Proof of Theorem \ref{thm:repperpts}]

Recall that the hypotheses are that $h:\R^n \to \overline{\R^n}$ is strongly automorphic with respect to the discrete group of isometries $G$, $M$ is a uqc linear map such that $MGM^{-1} \subset G$ and $f$ is the unique solution to the Schr\"oder equation $f\circ h = h\circ M$.

Choose $m\in \N$.  We wish to find a fixed point of $f^m$ which is not in $h(\mathcal{B}_{h})$.  Observe, a fixed point $x'$ of $f^m$ is given by $x' = h(u)$ where $u$ satisfies $f^m(h(u)) = h(u)$, but
\[f^m \circ h(u) = h \circ M^m(u).\]
Hence,
\begin{equation}\label{eq:perpts}
h(u) = h(M^m(u)).
\end{equation}

Since $h$ is strongly automorphic with respect to $G$, then there exists $g \in G$ so that
\[g(u) = M^m(u).\]
Since $g\in G$ is an isometry, there exists a translation $T^* \in G$ and a rotation $R\in \Stab(0)$, where $\Stab(0)$ is the subgroup of $G$ fixing $0$, so that
\begin{equation}\label{eq:defg}
g(u) = T^* \circ R(u) = M^m(u).
\end{equation}
Hence, there exists a $v\in \R^n$ so that $T^*(0) = v$ and $R(u) = M^m(u) - v$.  Since we can write
\[v = (M^m-R)(u),\]
and $M^m$ is loxodromic repelling while $R$ is a rotation, then $M^m-R$ is a non-degenerate linear map.  Hence, $u = (M^m-R)^{-1}(v)$.  

By (\ref{eq:perpts}), the fixed point $x'$ of $f^m$ is given by
\begin{equation}\label{eq:rppformula}
 x' = h \left( (M^m-R)^{-1}(v) \right ).
\end{equation}

Finally, for $T'(x) = x + u$, we will show that there exists $q \in \N$, which is independent of $x'$, so that $L = h \circ T'$ is a linearizer for $f^{qm}$.   It is easy to see that
\begin{equation}\label{eq:linperptszero} 
L(0) = x'. 
\end{equation}

Next, observe that since $M$ is linear, then
\begin{align} f^m \circ L(x) & = f^m \circ h \circ T'(x)\nonumber\\
&  = h \circ M^m \circ T'(x)\nonumber\\
& = h \circ M^m (x + u)\nonumber\\
& = h (M^m x + M^m u) \label{eq:LTam},
\end{align}
while
\begin{align}L \circ M^m(x) & = h \circ T' \circ M^m(x)\nonumber\\
&= h \circ  T' \circ M^m (x)\nonumber\\
&= h (M^m(x) + u) \label{eq:LAmT}.
\end{align}

Recall, by (\ref{eq:defg}) that
\[T^*\circ R(u) = M^m u.\]
If $R = Id$, we are done, since $h$ is strongly automorphic with respect to $G$.  Otherwise, applying $M^m$ to both sides of the above and using the fact that $R$ is a linear map yields
\begin{align*}
M^{2m} u &= M^m \circ T^* \circ R(u) \\
&= M^m(R(u) + v)\\
&= R(M^mu) + M^m v\\
& = R(T^*(R(u)) + M^m v\\
& = R(R(u) + v) + M^m v\\
& = R^2(u) + R(v) + M^m v.
\end{align*}
By induction, we see for $k\geq 1$ that
\begin{equation}\label{eq:simlininduction}
M^{km}u = R^k(u) + R^{k-1}(v) + \cdots + M^{(k-2)m}R(v) + M^{(k-1)m} v.
\end{equation}
Since the point group $P = G/\mathcal{T}$ is finite, then for any $R' \in P$, if
\[q = |P|,\]
where $|P|$ denotes the number of elements in $P$, then $(R')^q = Id$. Hence by (\ref{eq:simlininduction}), 
\begin{equation}
M^{qm} u = u +  R^{q-1}(v) + M^mR^{q-2}(v) + \cdots + M^{(q-2)m}R(v) + M^{(q-1)m}v =: u+\zeta_q.
\end{equation}
Finally, there exists a translation $T_1 \in G$ defined by
\[T_1(x) = x + \zeta_q \]
so that $T_1(u) = M^{qm}u$.  

Hence, for any $x \in \R^n$,
\[T_1(M^{qm} x + u) = M^{qm} x + M^{qm}u,\]
and so
\begin{equation}\label{eq:globaltransperpts}
h(M^{qm} x + M^{qm} u) = h\circ T_1(M^{qm} x + u) = h(M^{qm}x + u).
\end{equation}
Hence, by (\ref{eq:LTam}), (\ref{eq:LAmT}) and (\ref{eq:globaltransperpts}), $h \circ M^{qm} \circ T' = h \circ T'\circ  M^{qm}$ and so
\begin{equation}\label{eq:perptsfunctional}
f^{qm} \circ L = L \circ M^{qm}.
\end{equation}

Since $M = \lambda \mathcal{O}$, $M$ maps the lattice $\Lambda$ into itself and so there exists $p\in \N$ so that $\mathcal{O}^p = Id$.  Hence, $M^p = \lambda^p Id$.  Now, since $L$ is locally injective near zero by the assumption that $x'\notin h(\mathcal{B}_h)$, then in a neighbourhood $U$ of $x'$, we can write
\begin{equation}
f^{pqm} = L \circ M^{pqm}\circ L^{-1}.
\end{equation}
By Theorem \ref{thm:invgender}, $T( L(0), L^{-1})$ is simple.  It immediately follows from the proof of Theorem \ref{thm:simpdiff} that $f^{rm}$ is differentiable at $L(0)$, for $r = $lcm$(p,q)$ the least common multiple of $p$ and $q$, with derivative $(f^{rm})'(x')=\lambda^{drm}Id$, where $d$ is the homogeneity factor of $L$ at zero.  Recalling the hypothesis that $d = 1$, then $(f^{rm})'(x')=\lambda^{rm}Id.$
Hence, 
\begin{equation}\label{eq:infspacef}
\mathcal{D}f^{qm}(x_0) = \{M^{qm}\}
\end{equation}
and so $L$ semi-conjugates $f^{qm}$ to $x\mapsto (f^{qm})'(x')x$.

To finish, by (\ref{eq:linperptszero}), (\ref{eq:perptsfunctional}) and (\ref{eq:infspacef}), $L = h \circ T'$ is a linearizer of $f^{rm}$.  Hence, for each repelling fixed point $x'$ of $f^m$ so that $x' \notin h(\mathcal{B}_h)$, there exists a translation $T'$ such that $h \circ T'$ is a linearizer of $f^{rm}$ associated with the repelling fixed point if $L$ is injective near $0$.  Note, since branch points of $h$ correspond to fixed points of rotations in $G'$, and $P = G'/\Tr$ is discrete, then $h$ simultaneously linearizes iterates of $f$ at repelling periodic points outside of $h(\mathcal{B}_h)$.
\end{proof}


\begin{thebibliography}{widest-label}


\bibitem{EV}
A. Eremenko, S. van Strien,
Rational maps with real multipliers,
{\it Trans. Amer. Math. Soc.}, Vol.363, (2010), No.12, 6453-6463

\bibitem{Fletcher1}
A. Fletcher,
Poincar\'{e} linearizers in higher dimensions,
{\it Proc. Amer. Math. Soc.}, {\bf 143} (2015), 2543-2557.

\bibitem{FM}
A. Fletcher, D. MacClure,
On quasiregular linearizers,
{\it Comp. Meth. and Func. Th.}, {\bf 15}, no.2 (2015), 263-276.

\bibitem{FM2}
A. Fletcher, D. MacClure,
Strongly automorphic mappings and their uniformly quasiregular Julia sets,
arXiv:1703.02455.

\bibitem{FMWW}
A. Fletcher, D. Macclure, J. Waterman, S. Wesley,
On the infinitesimal space of uqr mappings, 
{\it Journal of Analysis}, Volume 24, Issue 1 (2016), 67-81.

\bibitem{FW}
A. Fletcher, B. Wallis, 
The orbits of generalized derivatives,
arXiv:1706.04111.


\bibitem{GMRV}
V. Gutlyanskii, O. Martio, V. Ryazanov, M. Vuorinen,
Infinitesimal Geometry of Quasiregular Mappings,
{\it Ann. Acad. Sci. Fenn.},
{\bf 25} (2000), 101-130.

\bibitem{GMRV2}
V. Gutlyanski, O. Martio, V. Ryazanov, M. Vuorinen,
On asymptotic behavior of quasiconformal mappings in the space,
{\it Quasiconformal Mappings and Analysis}
(1998), 159-180.

\bibitem{Heinonen}
J. Heinonen,
{\it Analysis on metric spaces}, Springer-Verlag, New York, 2001.


\bibitem{HMM}
A. Hinkkanen, G. Martin, V. Mayer,
Local Dynamics of Uniformly Quasiregular Mappings,
{\it Math. Scand.}, {\bf 95} (2004) no. 1, 80-100.

\bibitem{IM}
T. Iwaniec, G. Martin,
{\it Geometric Function Theory and Non-linear Analysis},
Oxford University Press, 2001.



\bibitem{Milnor2}
J. Milnor,
On Latt\'es maps,
{\it Dynamics on the Riemann sphere}, 9-43, Eur. Math. Soc., Z\"urich, 2006.


\bibitem{Rickman} S. Rickman,
{\it Quasiregular mappings}, Ergebnisse der Mathematik und ihrer
Grenzgebiete 26, Springer, 1993.


\bibitem{Ritt}
J. Ritt,
Periodic Functions with a Multiplication Theorem,
{\it Trans. Amer. Math. Soc.}, Vol. 23, No. 1 (Jan., 1922), pp. 16-25.

\bibitem{Siebert}
H. Siebert, Fixpunkte und normale Familien quasiregul\"arer Abbildungen,
Dissertation, University of Kiel, 2004.



\end{thebibliography}
\end{document}